\newcommand{\R}{{\mathbb R}}
\newcommand{\E}{{\mathbb E}}\makeatletter
\newcommand{\N}{{\mathbb N}}
\providecommand{\algorithmname}{Algorithm}
  \theoremstyle{definition}
  \newtheorem*{condition*}{\protect\conditionname}
\theoremstyle{plain}
\newtheorem{thm}{\protect\theoremname}[section]
  \theoremstyle{plain}
  \newtheorem{prop}[thm]{\protect\propositionname}
  \theoremstyle{definition}
  \theoremstyle{plain}
  \theoremstyle{plain}
  \theoremstyle{plain}
  \newtheorem{remark}[thm]{\protect\remarkname}
  \theoremstyle{plain}
\newtheorem*{assumption*}{\protect\assumptionname}
\numberwithin{equation}{section}
\providecommand{\assumptionname}{Assumption}
\providecommand{\conditionname}{Condition}
\providecommand{\corollaryname}{Corollary}
\providecommand{\definitionname}{Definition}
\providecommand{\lemmaname}{Lemma}
\providecommand{\propositionname}{Proposition}
\providecommand{\theoremname}{Theorem}
\providecommand{\remarkname}{Remark}
\begin{document}

\title{Bias behaviour and antithetic sampling in mean-field particle approximations of SDEs nonlinear in the sense of McKean}

\author{O. Bencheikh and B. Jourdain\thanks{Universit\'e Paris-Est, Cermics (ENPC), INRIA, F-77455 Marne-la-Vall\'ee, France. E-mails : benjamin.jourdain@enpc.fr, oumaima.bencheikh@enpc.fr}}
\maketitle
\begin{abstract}
In this paper, we prove that the weak error between a stochastic differential equation with nonlinearity in the sense of McKean given by moments and its approximation by the Euler discretization with time-step $h$ of a system of $N$ interacting particles is ${\mathcal O}(N^{-1}+h)$. We provide numerical experiments confirming this behaviour and showing that it extends to more general mean-field interaction and study the efficiency of the antithetic sampling technique on the same examples.
\end{abstract}


\section{Introduction}

According to \cite{Szn91}, the strong rate of convergence of particle approximations of McKean-Vlasov Stochastic differential equations with Lipschitz coefficients is ${\mathcal O}(N^{-1/2})$ when $N$ denotes the number of particles. This rate is driven by the statistical error and one may wonder whether the bias vanishes quicker. A parallel can be drawn with the time discretization of standard stochastic differential equations where, for Lipschitz coefficients, the strong rate of convergence of the explicit Euler-Maruyama scheme is ${\mathcal O}(\sqrt{h})$ \cite{Ka} with $h>0$ denoting the time-step. Using the Feynman-Kac partial differential equation associated with the stochastic differential equation, Talay and Tubaro \cite{tt} checked that, for smooth coefficients, the weak error behaves in ${\mathcal O}(h)$ and can even be expanded in powers of $h$. In the context of particles interacting through jumps, the ${\mathcal O}(N^{-1})$ behaviour of the bias is known. According to \cite{GM}, for particle approximations of generalized Boltzmann equations, the total variation distance between the law of the path of a particle and the one of the limiting nonlinear Boltzmann process behaves in ${\mathcal O}(N^{-1})$. For Feynman-Kac particle models, expansions in powers of $1/N$ are obtained in \cite{DMPR}. 

The interest in the bias introduced by particle approximations is motivated by numerical efficiency. Indeed, the numerical experiments performed in Section \ref{sec:num} show a general ${\mathcal O}(N^{-1})$ behaviour of the bias, even in models with not so smooth coefficients. Under this behaviour, simulating $\sqrt{N}$ independent copies of the system with $\sqrt{N}$ particles leads to the same order of error (bias with the same order as the ${\mathcal O}(N^{-1/4}\times N^{-1/4})={\mathcal O}(N^{-1/2})$ statistical error) as one simulation of the system with $N$ particles (bias smaller than the ${\mathcal O}(N^{-1/2})$ statistical error). And the former approach is less expensive than the latter as soon as the computational cost of the particle system grows more than linearly with the number of particles. The behaviour of the bias is also of interest in order to adapt to the number of particles the multilevel Monte Carlo method introduced by Giles \cite{Gil08} in the context of time discretization of standard stochastic differential equations. In \cite{HAT}, Haji-Ali and Tempone combine both discretizations through the Multi-index Monte Carlo method.
In this perspective, another interesting question is the possibility to take advantage of the antithetic sampling technique introduced in \cite{Ha} to reduce the variance (see \cite{CGL,BHR} or \cite{Gil15} p57 for the use of this technique in nested Monte Carlo computations). Does the variance of the difference between the empirical mean of the system with $2N$ particles driven by the i.i.d. couples $(X^i_0,W^i)_{1\le i\le 2N}$ ($X^i_0$ and $W^i$ respectively denote the initial condition and the Brownian motion of the $i$-th particle) and the mean of the empirical means of the two independent systems with $N$ particles respectively driven by $(X^i_0,W^i)_{1\le i\le N}$ and $(X^i_0,W^i)_{N+1\le i\le 2N}$ converge quicker to $0$ than ${\mathcal O}(N^{-1})$?

In this paper, using the Feynman-Kac partial differential equation associated with the limiting nonlinear process and its moments, we prove the respective ${\mathcal O}(N^{-1})$ and ${\mathcal O}(N^{-1}+h)$ behaviour of the bias for systems of diffusive particles interacting through moments and their Euler discretization with time step $h$. Of course, the computational cost of such systems is linear in the number $N$ of particles and the above numerical motivation is not valid. Nevertheless this result can be seen as a first step before addressing more general interactions which could necessitate more advanced tools like the master equation for mean-field games introduced by Lions in his seminal lectures at {\it Coll\`ege de France} and studied in \cite{CCD} from a probabilistic point of view. Theorem 3.2 \cite{KT} is proved using the master equation and implies the ${\mathcal O}(N^{-1})$ behaviour of the bias for one-dimensional stochastic differential equations with general interaction in the drift coefficient but no interaction in the diffusion coefficient. We provide numerical experiments showing that the ${\mathcal O}(N^{-1})$ behaviour holds in more general situations including ones with non smooth coefficients. Last, on the same examples, we check that the antithetic variance does not generally decrease quicker than ${\mathcal O}(N^{-1})$.

\section{Estimation of the bias for systems of particles interacting through moments}
Let $\alpha:\R^n\to\R^p$ be Lipchitz continuous and $\sigma=(\sigma^l_j)_{1\le j\le n,1\le l\le d}:[0,T]\times\R^p\times\R^n\to\R^{n\times d}$, $b=(b_j)_{1\le j\le n}:[0,T]\times\R^p\times\R^n\to\R^{n}$ be Lipschitz continuous in their $p+n$ last components uniformly in their first component and such that $\sup_{t\in[0,T]}(|\sigma(t,0,0)|+|b(t,0,0)|)<\infty$.  We consider the stochastic differential equation in dimension $n$ nonlinear in the sense of McKean 
\begin{equation}
   X_t=X_0+\int_0^t\sigma(s,\E[\alpha(X_s)],X_s)dW_s+\int_0^tb(s,\E[\alpha(X_s)],X_s)ds,\;t\in[0,T]\label{edsnl}
\end{equation}
where $X_0$ is some square integrable $\R^n$-valued random variable independent from the $d$-dimensional Brownian motion $(W_t)_{t\ge 0}$. The drift and diffusion coefficient at time $s$ depend on the law of $X_s$ through the moments $\E\left[\alpha\left(X_s\right)\right]$. Since $\sigma $ may only depend on a subset of coordinates of this expectation on $b$ on another subset, moments in drift and diffusion can differ as well as their respective dimensions.
By a solution, we mean a continuous process $(X_t)_{t\in[0,T]}$ adapted to the filtration generated by the Brownian motion $W$ and $X_0$ such that $\sup_{t\in[0,T]}\E[|\alpha(X_t)|]<\infty$ and the above equation holds with the integrals with respect to $dW_s$ and $ds$ making sense. Notice that for any solution, $x\mapsto (\sigma(s,\E[\alpha(X_s)],x),b(s,\E[\alpha(X_s)],x))$ has affine growth uniformly for $s\in[0,T]$. With the square integrability of $X_0$, this implies that $\E\left[\sup_{t\in[0,T]}|X_t|^2
\right]+\sup_{0\le s<t\le T}\frac{\E\left[|X_t-X_s|^2\right]}{t-s}<\infty$ so that, by Lipschitz continuity of $\alpha$, $t\mapsto\E[\alpha(X_t)]$ is H\"older continuous with exponent $1/2$ on $[0,T]$.\\

The  particle approximation of the SDE nonlinear in the sense of McKean \eqref{edsnl} is given by the system with mean-field interaction
\begin{equation}
   X^{i,N}_t=X^i_0+\int_0^t\sigma\bigg(s,\frac 1 N \sum_{j=1}^N\alpha(X^{j,N}_s),X^{i,N}_s\bigg)dW^i_s+\int_0^tb\bigg(s,\frac 1 N \sum_{j=1}^N\alpha(X^{j,N}_s),X^{i,N}_s\bigg)ds,\;1\le i\le N,\;t\in[0,T],\label{systpart}
\end{equation}
with $((X^i_0,W^i))_{i\ge 1}$ i.i.d. copies of $(X_0,W)$. By the Lipschitz assumptions, existence and trajectorial uniqueness hold for this $N\times n$ dimensional equation. The Yamada-Watanabe theorem ensures weak uniqueness and therefore exchangeability of the particles $((X^{i,N}_t)_{t\in[0,T]})_{1\le i\le N}$. Let us also introduce the Euler discretizations with time-step $h>0$ of the SDE \eqref{edsnl} and the particle system : 

\begin{align}
   X^h_t&=X_0+\int_0^t\sigma(\tau^h_s,\E[\alpha(X^h_{\tau^h_s})],X^h_{\tau^h_s})dW_s+\int_0^tb(\tau^h_s,\E[\alpha(X^h_{\tau^h_s})],X^h_{\tau^h_s})ds,\;t\in[0,T]\mbox{ where }\tau^h_s=\lfloor s/h\rfloor h,\label{edsnlh}\\
X^{i,N,h}_t&=X^i_0+\int_0^t\sigma\bigg(\tau^h_s,\frac 1 N \sum_{j=1}^N\alpha(X^{j,N,h}_{\tau^h_s}),X^{i,N,h}_{\tau^h_s}\bigg)dW^i_s+\int_0^tb\bigg(\tau^h_s,\frac 1 N \sum_{j=1}^N\alpha(X^{j,N,h}_{\tau^h_s}),X^{i,N,h}_{\tau^h_s}\bigg)ds,\;1\le i\le N,\;t\in[0,T].\notag
\end{align}
It is natural and convenient to consider that $\tau^0_s=s$, $(X^{0}_t)_{t\in[0,T]}=(X_t)_{t\in[0,T]}$ and $(X^{i,N,0}_t)_{t\in[0,T],1\le i\le N}=(X^{i,N}_t)_{t\in[0,T],1\le i\le N}$ and we use these notations in what follows.

\newpage 

Reasoning like in the laboratory example in \cite{Szn91} or in Theorem 1.3 \cite{JMW}, one easily checks the following result.
\begin{prop}\label{propsznit}
   Strong existence and trajectorial uniqueness hold for the SDE nonlinear in the sense of McKean \eqref{edsnl} and its Euler discretization \eqref{edsnlh}. Moreover $\sup_{h\ge 0}\E\left[\sup_{t\in[0,T]}|X^h_t|^2\right]\le C(1+\E[|X_0|^2])$ where the finite constant $C$ does not depend on $X_0$. Last, if for $i\in\N^*$, $(X^{i,h}_t)_{t\in[0,T]}$ denotes the process obtained by replacement of $(X_0,W)$ by $(X^i_0,W^i)$ in \eqref{edsnlh}, then 
$$\exists C<\infty,\;\forall N\in\N^*,\;\sup_{h\ge 0}\max_{1\le i\le N}\E\left[\sup_{t\in[0,T]}|X^{i,N,h}_t-X^{i,h}_t|^2\right]\le C\frac{1+\E[|X_0|^2]}{N}.$$
\end{prop}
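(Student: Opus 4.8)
The plan is to treat the three assertions by now-classical arguments for McKean--Vlasov equations. For \textbf{strong existence and trajectorial uniqueness}, I would exploit that the coefficients depend on the law of the solution only through the $\R^p$-valued moment curve $t\mapsto\E[\alpha(X_t)]$, and recast \eqref{edsnl} as a fixed-point problem on $C([0,T],\R^p)$: to a continuous curve $m$ associate the strong solution $Y^m$ — existence and trajectorial uniqueness of which hold because, $m$ being bounded on $[0,T]$, the map $x\mapsto(\sigma(s,m_s,x),b(s,m_s,x))$ is Lipschitz with affine growth in $x$ uniformly in $s$ — of the SDE obtained by freezing the moment argument at $m_s$, and set $\Psi(m)_t=\E[\alpha(Y^m_t)]$, which is finite and continuous since $\E[\sup_{t\le T}|Y^m_t|^2]<\infty$ and $\alpha$ is Lipschitz. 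By trajectorial uniqueness of the frozen SDE, solutions of \eqref{edsnl} correspond bijectively to fixed points of $\Psi$, and a BDG/Cauchy--Schwarz/Gronwall estimate of the form $\sup_{r\le t}\E[|Y^m_r-Y^{\tilde m}_r|^2]\le Cte^{Ct}\sup_{r\le t}|m_r-\tilde m_r|^2$, iterated in the usual way, gives $\|\Psi^k(m)-\Psi^k(\tilde m)\|_\infty^2\le\frac{(CT)^k}{k!}\|m-\tilde m\|_\infty^2$, so a power of $\Psi$ is a contraction and Banach's theorem applies. For the Euler scheme \eqref{edsnlh} no fixed point is needed: since $\tau^h_s$ is piecewise constant, $X^h$ is constructed explicitly on the grid by $X^h_{(k+1)h}=X^h_{kh}+\sigma(kh,m_{kh},X^h_{kh})(W_{(k+1)h}-W_{kh})+b(kh,m_{kh},X^h_{kh})h$ together with $m_{kh}=\E[\alpha(X^h_{kh})]$, which is finite at each step by induction using the affine growth of $\sigma,b$; existence and uniqueness are then immediate.

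For the \textbf{uniform-in-$h$ moment bound} (the case $h=0$ included through $\tau^0_s=s$), set $u_h(t)=\E[\sup_{r\le t}|X^h_r|^2]$, which is finite by the above or, to be fully rigorous, obtained first on $[0,t\wedge\theta_M]$ with $\theta_M=\inf\{t:|X^h_t|\ge M\}$ and then recovered by letting $M\to\infty$ via Fatou. From $|\alpha(x)|\le C(1+|x|)$ and Jensen we get $|\E[\alpha(X^h_{\tau^h_s})]|^2\le C(1+u_h(s))$; plugging this into the affine growth bound for $\sigma,b$ and using BDG and Cauchy--Schwarz yields $u_h(t)\le C(1+\E[|X_0|^2])+C\int_0^t u_h(s)\,ds$, whence $u_h(T)\le C(1+\E[|X_0|^2])$ by Gronwall, with $C$ independent of $h$ and of $X_0$.

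For the \textbf{$\mathcal O(1/N)$ coupling estimate}, write $D^i_t=X^{i,N,h}_t-X^{i,h}_t$ with $(X^{i,h})_i$ i.i.d.\ copies of $X^h$ driven by $(X^i_0,W^i)$, so that $\E[\alpha(X^{i,h}_t)]=m_t:=\E[\alpha(X^h_t)]$. Inserting the empirical moment $\bar m^{N,h}_t=\frac1N\sum_j\alpha(X^{j,h}_t)$ of the i.i.d.\ system, the drift and diffusion differences in the SDE satisfied by $D^i$ split into a term bounded, by Lipschitz continuity of $\sigma,b,\alpha$, by $C\big(|D^i_{\tau^h_s}|+\frac1N\sum_j|D^j_{\tau^h_s}|\big)$, and a term bounded by $C|\bar m^{N,h}_{\tau^h_s}-m_{\tau^h_s}|$, the latter an average of i.i.d.\ centered vectors whose second moment is $\le\frac CN\E[|\alpha(X^{1,h}_{\tau^h_s})|^2]\le\frac CN(1+\E[|X_0|^2])$ by the moment bound. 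Setting $v(t)=\E[\sup_{r\le t}|D^1_r|^2]$, which equals $\E[\sup_{r\le t}|D^i_r|^2]$ for every $i$ by exchangeability of $((X^{i,N,h},X^{i,h}))_{1\le i\le N}$ and is finite by the moment bound, BDG — the integrands being adapted to the filtration generated by all the $(X^j_0,W^j)$, with respect to which each $W^i$ is still Brownian — together with Cauchy--Schwarz, the inequality $\big(\frac1N\sum_j|D^j|\big)^2\le\frac1N\sum_j|D^j|^2$ and exchangeability, gives $v(t)\le C\int_0^t v(s)\,ds+CT\frac{1+\E[|X_0|^2]}{N}$, and Gronwall concludes with $C$ independent of $N$ and $h$.

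No step is genuinely difficult; the main points requiring care are ensuring the a priori finiteness of the quantities to which Gronwall is applied (handled by the explicit construction when $h>0$ and by localization when $h=0$), upgrading the contraction-for-small-time estimate to a global fixed point by iterating the Gronwall bound, and checking throughout that every constant is independent of $h$, $N$ and $X_0$.
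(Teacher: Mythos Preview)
Your proposal is correct and follows precisely the classical route the paper itself points to (Sznitman's ``laboratory example'' and Theorem~1.3 in \cite{JMW}) without spelling it out: a fixed-point argument on the moment curve $t\mapsto\E[\alpha(X_t)]$ for well-posedness, a BDG/Gronwall estimate for the uniform moment bound, and Sznitman's coupling with the i.i.d.\ system $(X^{i,h})_i$ for the $\mathcal O(1/N)$ estimate. Since the paper gives no proof of its own beyond these references, there is nothing further to compare.
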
 
If for $i\in\N^*$, $(X^{i}_t)_{t\in[0,T]}$ denotes the process obtained by replacement of $(X_0,W)$ by $(X^i_0,W^i)$ in \eqref{edsnl}, this implies the following estimation of the bias introduced by the particle discretization: for any function $\psi:\R^n\to \R$ Lipschitz with constant $L_\psi$ and in particular for each coordinate of $\alpha$, 
\begin{align*}
   \forall s\in[0,T],\;|\E[\psi(X^{1,N}_s)]-\E[\psi(X_s)]|&=|\E[\psi(X^{1,N}_s)]-\E[\psi(X^1_s)]|\le L_\psi\E[|X^{1,N}_s-X^1_s|]\\&\le L_\psi \left(\sup_{h \geq 0} \E\left[\sup_{t\in[0,T]}|X^{1,N,h}_t-X^{1,h}_t|^2\right] \right)^{1/2}\le \sqrt{C}L_\psi\frac{\sqrt{1+\E[|X_0|^2]}}{\sqrt{N}}.
\end{align*}
The first inequality is crude since it prevents cancelations in average and one may wonder whether the bias converges faster to $0$ as $N\to\infty$. Under additional regularity, the answer is positive, which is our main result.
\begin{thm}\label{thmbias}
  Assume that \begin{itemize}\item $\sigma$ is Lipschitz continuous in its $p+n$ last components uniformly in its first component and such that \\ $\sup_{t\in[0,T]}(|\sigma(t,0,0)|+|b(t,0,0)|)<\infty$,\item $\alpha,\psi$ are two times continuously differentiable with bounded derivatives up to the order two and Lipschitz continuous second order derivatives,
\item $a=\sigma\sigma^*$ and $b$ are globally Lipschitz continuous, continuously differentiable with respect to their variables with index in $\{2,\hdots,1+p\}$ with derivatives Lipschitz continuous with respect to their $p+n$ last variables uniformly in their first variable, \item there exists $\tilde d\in\N^*$, $\tilde\sigma:[0,T]\times\R^p\times\R^n\to\R^{n\times \tilde d}$ such that for all $(t,y,x)\in[0,T]\times\R^p\times\R^n$, $a(t,y,x)=\tilde\sigma\tilde\sigma^*(t,y,x)$ and $\tilde\sigma,b$ are continuous and two times continuously differentiable with respect to their $n$ last variables with bounded derivatives up to the order two and second order derivatives Lipschitz continuous with respect to their $n$ last variables uniformly in their $1+p$ first variables,

\end{itemize} Then 
$$\exists C<\infty,\;\forall h\ge 0,\;\forall N\in\N^*,\;\sup_{t\in[0,T]}|\E[\psi(X^{1,N,h}_t)]-\E[\psi(X_t)]|\le C\left(\frac{1}{N}+h\right).$$
\end{thm}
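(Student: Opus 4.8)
The plan is to convert the bias into a time integral by means of the Feynman--Kac PDE of the limiting process, split that integral into a genuine time-discretization part (treated as in Talay--Tubaro) and an empirical-moment part, and close the resulting estimate by a Gr\"onwall argument in time; the essential difficulty is to show that two distinct particles decorrelate at rate $1/N$ rather than the naive $1/\sqrt N$.

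It is enough to bound, uniformly in $t\in[0,T]$ and over the class $\mathcal C$ of $\phi\in C^2(\R^n)$ with $|\nabla\phi|\le 1$, $|\nabla^2\phi|\le1$ and $\nabla^2\phi$ $1$-Lipschitz, the quantity $|\E[\phi(X^{1,N,h}_t)]-\E[\phi(X_t)]|$ by $C(N^{-1}+h)$: suitable multiples of $\psi$ and of the coordinates of $\alpha$ lie in $\mathcal C$, so this gives the theorem and also feeds the recursion below. Fix such $t,\phi$; for readability take $t=T$, the constants being uniform in $t$, and put $m(s):=\E[\alpha(X_s)]$, a $C^1$ function of $s$ under the assumptions. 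Since the interaction enters only through the finite-dimensional $m$, $X$ has the law of the \emph{linear} time-inhomogeneous SDE with coefficients $\sigma(s,m(s),\cdot),b(s,m(s),\cdot)$; writing $a=\tilde\sigma\tilde\sigma^\ast$, the smoothness of $\tilde\sigma,b$ in $x$ and of $\phi$ makes the backward Kolmogorov solution $v$ (terminal datum $\phi$ at $T$, coefficients $a(s,m(s),\cdot),b(s,m(s),\cdot)$) belong to $C^{1,2}([0,T]\times\R^n)$ with $\partial_s v,\nabla_x v,\nabla^2_x v$ bounded, $\nabla^2_x v$ Lipschitz in $x$ uniformly in $s$, and with $\E[\phi(X_T)]=\E[v(0,X_0)]$. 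It\^o's formula applied to $s\mapsto v(s,X^{1,N,h}_s)$, expectation (the stochastic integral being a true martingale), and the PDE then give, with $\mu^N_r:=\frac1N\sum_{j=1}^N\alpha(X^{j,N,h}_r)$,
\begin{align*}
\E[\phi(X^{1,N,h}_T)]-\E[\phi(X_T)]=\E\!\int_0^T\!\Big(&\big[b(\tau^h_s,\mu^N_{\tau^h_s},X^{1,N,h}_{\tau^h_s})-b(s,m(s),X^{1,N,h}_s)\big]\cdot\nabla_x v(s,X^{1,N,h}_s)\\
&+\tfrac12\big[a(\tau^h_s,\mu^N_{\tau^h_s},X^{1,N,h}_{\tau^h_s})-a(s,m(s),X^{1,N,h}_s)\big]:\nabla^2_x v(s,X^{1,N,h}_s)\Big)ds,
\end{align*}
and I split each bracket into a time/space mismatch evaluated at the frozen moment $m(\tau^h_s)$ and a mismatch in the moment variable.

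The time/space mismatch (e.g.\ $b(\tau^h_s,m(\tau^h_s),X^{1,N,h}_{\tau^h_s})-b(s,m(s),X^{1,N,h}_s)$ contracted with $\nabla_x v$) I would bound by $O(h)$ as in Talay--Tubaro: changing $\tau^h_s$ to $s$ costs $O(h)$ ($a,b$ Lipschitz in time, $m$ Lipschitz); for the spatial increment one uses that $X^{1,N,h}$ is an Euler scheme, so $X^{1,N,h}_s-X^{1,N,h}_{\tau^h_s}$ is an explicit Gaussian-plus-drift increment, $O(\sqrt h)$ in every $L^q$, with $\mathcal F_{\tau^h_s}$-conditional mean $O(h)$; a second-order Taylor expansion of $a,b$ in $x$ and a first-order one of $\nabla_x v,\nabla^2_x v$ with a \emph{Lipschitz} residual (this is where the assumed regularity, only twice-differentiability with Lipschitz second derivatives, suffices, thanks to $\nabla^2_x v$ being Lipschitz) reduces the martingale-increment terms, after conditional expectation, either to zero or to $O(h)$, the residuals being $O(h^{3/2})$. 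The moment mismatch equals, up to an $O(|\mu^N_{\tau^h_s}-m(\tau^h_s)|^2)$ remainder from $\partial_y a,\partial_y b$ being bounded and Lipschitz in $y$ (here only the $y$-regularity of $a$, not of $\sigma$ or $\tilde\sigma$, is used, as we are inside an expectation), a bounded particle-$1$-measurable weight $g^1_s$ built from $\partial_y a,\partial_y b,\nabla_x v,\nabla^2_x v$ times $\mu^N_{\tau^h_s}-m(\tau^h_s)$. Since, by Proposition \ref{propsznit}, $\|\mu^N_r-\E[\alpha(X^{1,N,h}_r)]\|^2_{L^2}=O(N^{-1})$ uniformly and $|\E[\alpha(X^{1,N,h}_r)]-\E[\alpha(X_r)]|\le C\bar e(r)$ with $\bar e(r):=\sup_{u\le r}\sup_{\phi\in\mathcal C}|\E[\phi(X^{1,N,h}_u)]-\E[\phi(X_u)]|$, the remainder is $O(N^{-1})+C\int_0^T\bar e(s)^2\,ds\le O(N^{-1})+C'\int_0^T\bar e(s)\,ds$ (using the a priori bound $\bar e\le C_0$). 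Writing $\mu^N_{\tau^h_s}-m(\tau^h_s)=(\mu^N_{\tau^h_s}-\E[\alpha(X^{1,N,h}_{\tau^h_s})])+(\E[\alpha(X^{1,N,h}_{\tau^h_s})]-\E[\alpha(X_{\tau^h_s})])$, the second, deterministic term again produces $C\int_0^T\bar e(s)\,ds$, and for the centred first term exchangeability gives
$$\E\big[g^1_s\cdot(\mu^N_{\tau^h_s}-\E[\alpha(X^{1,N,h}_{\tau^h_s})])\big]=\tfrac1N\E\big[g^1_s\cdot(\alpha(X^{1,N,h}_{\tau^h_s})-\E[\alpha(X^{1,N,h}_{\tau^h_s})])\big]+\tfrac{N-1}N\,\mathrm{Cov}\big(g^1_s,\alpha(X^{2,N,h}_{\tau^h_s})\big),$$
the first summand being $O(N^{-1})$.

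Thus everything reduces to the decorrelation estimate $|\mathrm{Cov}(F_1(X^{1,N,h}),F_2(X^{2,N,h}))|\le C/N$ for suitably integrable single-particle functionals, which is the crux: a coupling of $X^{i,N,h}$ to the decoupled Euler particle $X^{i,h}$ driven by the same $(X^i_0,W^i)$ only gives $O(N^{-1/2})$, so a cancellation must be exhibited. To do so I would exploit that the interaction is finite-dimensional, $X^{i,N,h}=Y^i[\mu^N]$ with $Y^i[\mu]$ the Euler process driven by $(X^i_0,W^i)$ with \emph{prescribed deterministic} moment path $\mu$, and expand the relevant quantities to second order around $\mu=m^h(\cdot):=\E[\alpha(X^h_\cdot)]$, for which $Y^i[m^h]=X^{i,h}$; the expansion is performed at the level of joint laws, where only $a,b$ — which are $C^1$ with Lipschitz $y$-derivatives — intervene, and $\|\mu^N-m^h\|_{L^q}=O(N^{-1/2})$ for every $q$. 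The zeroth-order terms are $X^{1,h}$ and $X^{2,h}$, which are independent and contribute nothing to the covariance; in the first-order correction the fluctuation $\mu^N-m^h$ may, to leading order, be replaced by the average $\bar\alpha^N:=\frac1N\sum_k(\alpha(X^{k,h}_\cdot)-\E[\alpha(X^h_\cdot)])$ over the \emph{decoupled i.i.d.}\ system, whose covariance kernel is $O(N^{-1})$, and upon taking expectations the only surviving contributions are those whose summation index matches particle $1$ or particle $2$, i.e.\ $O(N)$ among $O(N^2)$ terms — whence $O(N^{-1})$; the second-order and replacement remainders are $O(N^{-1})$ by the $L^q$-moment bounds of Proposition \ref{propsznit} (extended to all $q$, which is routine). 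Granting this and collecting the above, $|\E[\phi(X^{1,N,h}_T)]-\E[\phi(X_T)]|\le C(N^{-1}+h)+C\int_0^T\bar e(s)\,ds$; taking the supremum over $\phi\in\mathcal C$ and over terminal times $\le t$ gives $\bar e(t)\le C(N^{-1}+h)+C\int_0^t\bar e(s)\,ds$, and Gr\"onwall's lemma gives $\bar e(T)\le C(N^{-1}+h)$, which is the assertion. Besides the combinatorial bookkeeping of surviving cross-terms in the decorrelation estimate, the delicate points are the Talay--Tubaro $O(h)$ bound under the weak regularity (handled via the Lipschitz continuity of $\nabla^2_x v$) and the regularity of $v$ and of the flow $\mu\mapsto Y^i[\mu]$, for which it is essential to use the smooth square root $\tilde\sigma$ and to stay inside expectations so that only $a$ — not $\sigma$ — need be differentiated in the moment variable.
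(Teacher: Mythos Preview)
Your overall architecture---Feynman--Kac PDE for the limiting process, It\^o's formula on $v(s,X^{1,N,h}_s)$, splitting into a time-discretization part (handled \`a la Talay--Tubaro under the weak regularity, using that $\nabla^2_x v$ is Lipschitz) and an empirical-moment part, then closing by Gr\"onwall on the coordinates of $\alpha$---is exactly the paper's strategy, and your treatment of the $O(h)$ part and of the quadratic remainder $O(|\mu^N-m|^2)=O(1/N)$ matches the paper's decomposition into $e^{1,\psi}_{jk},\dots,e^{5,\psi}_{jk}$.

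The divergence, and the gap, is at the linear term $\E\big[g^1_s\cdot(\mu^N_{\tau^h_s}-\E[\alpha(X^{1,N,h}_{\tau^h_s})])\big]$. You reduce it to $\tfrac{N-1}{N}\mathrm{Cov}\big(g^1_s,\alpha(X^{2,N,h}_{\tau^h_s})\big)$ and then propose to bound this two-particle covariance by expanding $X^{i,N,h}=Y^i[\mu^N]$ to second order around the decoupled moment path. The difficulty is that the pathwise map $\mu\mapsto Y^i[\mu]$ is built from $\sigma$, which under the hypotheses is only Lipschitz in the moment variable $y$ (it is $a=\sigma\sigma^*$ that is $C^1$ with Lipschitz $y$-derivative), so a pathwise second-order expansion is not available. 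Your remedy ``work at the level of joint laws'' would require differentiating a two-particle backward Kolmogorov function in the moment path---effectively a two-particle master-equation argument---which is neither standard nor sketched, and whose justification under the present regularity is precisely what is at stake. The combinatorial counting you outline is correct in spirit, but the analytic framework that would make it rigorous is missing.

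The paper bypasses this entirely with a one-line symmetrization: since $\mu^N_{\tau^h_s}-\E[\alpha(X^{1,N,h}_{\tau^h_s})]$ is centred and the system is exchangeable, one may replace the single-particle weight $g^1_s$ by $\tfrac1N\sum_{i=1}^N g^i_s-\E[g^1_s]$ without changing the expectation; Cauchy--Schwarz then gives a product of two centred empirical-mean fluctuations, each $O(N^{-1/2})$ in $L^2$ by the same coupling estimate (Proposition~\ref{propsznit}) you already used for the quadratic remainder, hence $O(N^{-1})$. This needs only the boundedness and Lipschitz continuity in $x$ of $\partial_{jk}u_{t,\psi}$ and of $\nabla_2 a_{jk}$, all of which are already on the table, and no decorrelation estimate for two tagged particles is required.
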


The idea of the proof is, like in \cite{tt}, to use the Feynman-Kac partial differential equation associated with the nonlinear SDE \eqref{edsnl} to first check that the estimation holds for the coordinates of $\alpha$ before concluding that it holds for each test function $\psi$. The following proposition ensures existence and smoothness to this Feynman-Kac PDE which only depends on $\sigma$ through $a=\sigma\sigma^*$. In its statement and its proof based on a stochastic flow approach, we take advantage of the flexibility given by the choice of a square root of $a$ which explains the last assumption in Theorem \ref{thmbias}. Let $(\tilde W_t)_{t\ge 0}$ be a $\tilde d$-dimensional Brownian motion with coordinates $(\tilde W^l_t)_{1\le l\le \tilde d}$ and for $(s,x)\in[0,T]\times\R^n$, $({\tilde X}^{s,x}_t)_{t\in[s,T]}$ denote the solution to 
\begin{equation}
   \label{edsflot}
{\tilde X}^{s,x}_t=x+\int_s^t{\tilde \sigma}(r,\E[\alpha(X_r)],{\tilde X}^{s,x}_r)d{\tilde W}_r+\int_s^tb(r,\E[\alpha(X_r)],{\tilde X}^{s,x}_r)dr,\;t\in[s,T]
\end{equation}
where the coefficients depend on $\E[\alpha(X_r)]$ and not $\E[\alpha({\tilde X}^{s,x}_r)]$.

\begin{prop}\label{propregufk}Let $\psi:\R^n\to\R$ be two times continuously differentiable with bounded derivatives up to the order two and Lipschitz continuous second order derivatives. Under the last assumption in Theorem \ref{thmbias}, for each $t\in[0,T]$, the function $[0,t]\times R^n\ni (s,x)\mapsto u_{t,\psi}(s,x):=\E\left[\psi({\tilde X}^{s,x}_t)\right]$ is once (resp. twice) continuously differentiable with respect to the time (resp. space) variable $s$ (resp. $x$) and such that $u_{t,\psi}$ together with its spatial derivatives up to the order two are Lipschitz continuous in $x$ uniformly in $0\le s\le t\le T$. Moreover, $u_{t,\psi}$ is a classical solution to the Feynman-Kac PDE
\begin{equation}\label{fk}\begin{cases}
   \partial_su_{t,\psi}(s,x)+\frac{1}{2}\sum_{j,k=1}^na_{jk}(s,\E[\alpha(X_s)],x)\partial_{jk}u_{t,\psi}(s,x)+\sum_{j=1}^nb_{j}(s,\E[\alpha(X_s)],x)\partial_{j}u_{t,\psi}(s,x)=0,\;(s,x)\in[0,t]\times\R^n\\
u_{t,\psi}(t,x)=\psi(x),\;x\in\R^n
\end{cases}.
\end{equation}
Here $\partial_j$ and $\partial_{jk}$ denote the partial derivative with respect to the $j$-th coordinate of $x$ and with respect to its $j$-th and $k$-th coordinates.
\end{prop}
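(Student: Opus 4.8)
The plan is to establish smoothness of $u_{t,\psi}$ via a stochastic flow analysis of the SDE \eqref{edsflot}. Note first that the coefficients $r\mapsto\tilde\sigma(r,\E[\alpha(X_r)],\cdot)$ and $r\mapsto b(r,\E[\alpha(X_r)],\cdot)$ are, by Proposition \ref{propsznit} and the Lipschitz continuity of $\alpha$, bounded measurable in time and, under the last assumption of Theorem \ref{thmbias}, twice continuously differentiable in the space variable with bounded first and second derivatives and Lipschitz second derivatives; moreover, since $r\mapsto\E[\alpha(X_r)]$ is H\"older-$1/2$ continuous, these coefficients inherit a mild time regularity. This places us squarely in the classical setting (see Kunita, \emph{Stochastic Flows and Stochastic Differential Equations}) in which the flow $x\maps{\tilde X}^{s,x}_t$ is twice continuously differentiable, with the first variation process $\nabla_x{\tilde X}^{s,x}_t$ solving the linearized SDE and the second variation process $\nabla^2_x{\tilde X}^{s,x}_t$ solving the corresponding second-order variational SDE; standard Gronwall and Burkholder--Davis--Gundy estimates then give, uniformly in $0\le s\le t\le T$ and locally uniformly in $x$, moment bounds on these derivative processes.

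Next I would differentiate $u_{t,\psi}(s,x)=\E[\psi({\tilde X}^{s,x}_t)]$ under the expectation. Since $\psi\in C^2_b$ with Lipschitz second derivatives, the chain rule yields
$$\partial_j u_{t,\psi}(s,x)=\E\Big[\sum_{m=1}^n\partial_m\psi({\tilde X}^{s,x}_t)\,\partial_j{\tilde X}^{s,x,m}_t\Big],$$
and a second differentiation gives $\partial_{jk}u_{t,\psi}$ as an expectation involving $\nabla^2\psi$, the product of two first-variation processes, and $\nabla^2_x{\tilde X}^{s,x}_t$. The moment bounds above justify dominated convergence, so $u_{t,\psi}(s,\cdot)\in C^2$; the Lipschitz continuity in $x$ of $u_{t,\psi}$ and its spatial derivatives, uniformly in $0\le s\le t\le T$, follows from the boundedness of $\nabla\psi$ together with the fact that the derivative processes have moments bounded uniformly in $s,x$ and depend Lipschitz-continuously on $x$ (using here the Lipschitz continuity of $\nabla^2\psi$, $\nabla^2\tilde\sigma$ and $\nabla^2 b$ to control the second-variation process).

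For the time regularity and the PDE itself, I would fix $t$ and use the flow property ${\tilde X}^{s,x}_t={\tilde X}^{s',{\tilde X}^{s,x}_{s'}}_t$ for $s\le s'\le t$, which gives the semigroup-type identity $u_{t,\psi}(s,x)=\E[u_{t,\psi}(s',{\tilde X}^{s,x}_{s'})]$. Applying It\^o's formula to $r\mapsto u_{t,\psi}(s',{\tilde X}^{s,x}_r)$ on $[s,s']$ is not directly possible because $C^2$ in space plus mere continuity in $s$ does not suffice; instead I would first establish the PDE in a weak/mild form by letting $s'\downarrow s$ in the identity above, dividing by $s'-s$, and identifying the generator using the $C^2$ spatial regularity just obtained and the known (Feller) continuity of the coefficients in $r$ near $r=s$. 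This yields that $\partial_s u_{t,\psi}$ exists and equals $-\tfrac12\sum a_{jk}\partial_{jk}u_{t,\psi}-\sum b_j\partial_j u_{t,\psi}$, whose right-hand side is continuous in $(s,x)$; hence $u_{t,\psi}\in C^{1,2}$ and \eqref{fk} holds classically, with the terminal condition immediate from ${\tilde X}^{t,x}_t=x$. The main obstacle is precisely this last point: bootstrapping from "$C^2$ in space, continuous in time" to genuine $C^{1,2}$ regularity and a classical (not merely mild) PDE, given that the time-dependence of the coefficients through $\E[\alpha(X_r)]$ is only H\"older-$1/2$; the resolution is that the PDE forces $\partial_s u_{t,\psi}$ to be as regular as the coefficients and the second spatial derivatives, which is enough for the classical interpretation of \eqref{fk}.
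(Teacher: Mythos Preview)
Your proposal is correct and follows essentially the same route as the paper: stochastic-flow differentiability \`a la Kunita for the spatial regularity of $u_{t,\psi}$, then the flow/Markov identity $u_{t,\psi}(s,x)=\E[u_{t,\psi}(s',\tilde X^{s,x}_{s'})]$ combined with a small-time expansion (rather than It\^o on $u$) to recover the generator and hence $\partial_s u_{t,\psi}$. The paper carries out the last step by a second-order spatial Taylor expansion of $u_{t,\psi}(s,\cdot)$ between $x$ and $\tilde X^{s-\varepsilon,x}_s$, using the Lipschitz continuity of $\partial_{jk}u_{t,\psi}$ to bound the remainder by $C|\tilde X^{s-\varepsilon,x}_s-x|^3$ and It\^o's isometry for the quadratic cross terms; this is exactly the ``divide by $s'-s$ and identify the generator'' step you sketch, and your observation that continuity of the right-hand side then upgrades the one-sided time derivative to a genuine $C^1$ time derivative is the same closing argument the paper uses.
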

\begin{proof}
   Even if this result seems to be well-known, we could not find its proof in the literature. Therefore, we are going to give a sketch of its proof. For notational simplicity, we set ${\tilde \sigma}^0=b$ and ${\tilde W}^0_t=t$.

Let for $j\in\{1,\hdots,n\}$, $e_j$ denote the $j$-th vector of the canonical basis of $\R^n$ and for $l\in\{0,\hdots,\tilde d\}$, $\partial{\tilde \sigma}^l=(\partial_{1+p+k}{\tilde \sigma}^l_j)_{1\le j,k\le n}$  where $\partial_{1+p+k}$ is the partial derivative with respect to the $(1+p+k)$-th variable. Let also for $l\in\{0,\hdots,\tilde d\}$ and $y,z\in\R^n$, $\partial^2{\tilde \sigma}^lyz\in\R^n$ be defined by $(\partial^2{\tilde \sigma}^lyz)_j=\sum_{k,m=1}^n(\partial_{1+p+k}\partial_{1+p+m}{\tilde \sigma}^l_j)y_kz_m$ for $j\in\{1,\hdots n\}$.
 Since $x\mapsto [{\tilde \sigma},b](r,\E[\alpha(X_r)],x)$ has affine growth uniformly in $r\in[0,T]$, standard moment estimations ensure that $$\forall q\ge 1,\;\exists C<\infty,\;\forall (s,x)\in[0,T]\times\R^n,\;\E\left[\sup_{r\in[s,T]}|{\tilde X}^{s,x}_r|^q\right]\le C(1+|x|^q).$$
By \cite{Kun84}
, Theorem 3.3 p223 and its proof, for $r\in[s,T]$, $x\mapsto {\tilde X}^{s,x}_r$ is twice continuously differentiable $\mathbb{P}$-almost surely with derivatives satisfying for $j,k\in\{1,\hdots,n\}$
\begin{align*}
   d\partial_j {\tilde X}^{s,x}_r=&\sum_{l=0}^{\tilde d}\partial{\tilde \sigma}^l(r,\E[\alpha(X_r)],{\tilde X}^{s,x}_r)\partial_j {\tilde X}^{s,x}_rd{\tilde W}^l_r,\; r\in[s,T],\;\partial_j {\tilde X}^{s,x}_s=e_j\\
d\partial_{jk} {\tilde X}^{s,x}_r=&\sum_{l=0}^{\tilde d}\left(\partial{\tilde \sigma}^l(r,\E[\alpha(X_r)],{\tilde X}^{s,x}_r)\partial_{jk} {\tilde X}^{s,x}_r+\partial^2{\tilde \sigma}^l(r,\E[\alpha(X_r)],{\tilde X}^{s,x}_r)\partial_{j} {\tilde X}^{s,x}_r\partial_{k} {\tilde X}^{s,x}_r\right)d{\tilde W}^l_r,\;r\in[s,T],\;\partial_{jk} {\tilde X}^{s,x}_s=0.\end{align*}
Moreover, for any $q\in[1,\infty)$, 
\begin{align*}
  &\sup_{0\le s\le r\le T}\sup_{x\neq y \in\R^n}\sum_{j=1}^n\E\bigg[|\partial_j{\tilde X}^{s,x}_r|^q+\sum_{k=1}^n\bigg(|\partial_{jk}{\tilde X}^{s,x}_r|^q+\frac{|\partial_{jk}{\tilde X}^{s,x}_r-\partial_{jk}{\tilde X}^{s,y}_r|^q}{|x-y|^q}\bigg)\bigg]<\infty\\\mbox{ and }\exists C<\infty,\forall &x\in\R^n,\;\forall 0\le s\le \tilde s\le r\le T,\\&\E\left[|{\tilde X}^{s,x}_r-{\tilde X}^{\tilde s,x}_r|^q+\sum_{j=1}^n\bigg(|\partial_j {\tilde X}^{s,x}_r-\partial_j{\tilde X}^{\tilde s,x}_r|^q+\sum_{k=1}^n|\partial_{jk} {\tilde X}^{s,x}_r-\partial_{jk}{\tilde X}^{\tilde s,x}_r|^q\bigg)\right]\le C(1+|x|^q)(\tilde s-s)^{q/2}.
\end{align*}
These properties ensure that $x\mapsto u_{t,\psi}(s,x)$ is two times continuously differentiable with derivatives $\partial_j u_{t,\psi}(s,x)=\E[\nabla\psi({\tilde X}^{s,x}_t).\partial_j{\tilde X}^{s,x}_t]$ and $\partial_{jk} u_{t,\psi}(s,x)=\E[\nabla\psi({\tilde X}^{s,x}_t).\partial_{jk}{\tilde X}^{s,x}_t+\partial_{k}{\tilde X}^{s,x}_t.\nabla^2\psi({\tilde X}^{s,x}_t)\partial_{j}{\tilde X}^{s,x}_t]$ bounded and Lipschitz continuous in $x$ uniformly in $0\le s\le t\le T$. Moreover, $u_{t,\psi}$ and its spatial derivatives up to the order two are continuous in the time variable.
Let us now check that $u_{t,\psi}$ satisfies the Feynman-Kac PDE \eqref{fk}. Let $s\in(0,t]$ and $\varepsilon\in (0,s]$. By the flow property stated in Theorem 3.3 \cite{Kun84}, $u_{t,\psi}(s-\varepsilon,x)=\E\left[u_{t,\psi}(s,{\tilde X}^{s-\varepsilon,x}_s)\right]$. On the other hand, by Taylor expansion,
\begin{align*}
   u_{t,\psi}&(s,{\tilde X}^{s-\varepsilon,x}_s)-u_{t,\psi}(s,x)=\nabla_x u_{t,\psi}(s,x).\left(\int_{s-\varepsilon}^s{\tilde \sigma}(r,\E[\alpha(X_r)],{\tilde X}^{s-\varepsilon,x}_r)d{\tilde W}_r+\int_{s-\varepsilon}^sb(r,\E[\alpha(X_r)],{\tilde X}^{s-\varepsilon,x}_r)dr\right)\\&+\frac{1}{2}\sum_{j,k=1}^n\partial_{jk}u_{t,\psi}(s,x)\left(\sum_{l=0}^{\tilde d}\int_{s-\varepsilon}^s{\tilde \sigma}^l_j(r,\E[\alpha(X_r)],{\tilde X}^{s-\varepsilon,x}_r)d{\tilde W}^l_r\right)\left(\sum_{m=0}^{\tilde d}\int_{s-\varepsilon}^s{\tilde \sigma}^m_k(r,\E[\alpha(X_r)],{\tilde X}^{s-\varepsilon,x}_r)d{\tilde W}^m_r\right)+R_{\varepsilon}
\end{align*}
where $R_\varepsilon$ is some random reminder such that $|R_\varepsilon|\le C|{\tilde X}^{s-\varepsilon,x}_s-x|^3$ with $C$ a deterministic finite constant. Since $\E\left[\int_{s-\varepsilon}^s{\tilde \sigma}(r,\E[\alpha(X_r)],{\tilde X}^{s-\varepsilon,x}_r)d{\tilde W}_r\right]=0$ and, by It\^o's isometry, \begin{align*}
   \E\bigg[\left(\sum_{l=1}^{\tilde d}\int_{s-\varepsilon}^s{\tilde \sigma}^l_j(r,\E[\alpha(X_r)],{\tilde X}^{s-\varepsilon,x}_r)d{\tilde W}^l_r\right)&\left(\sum_{m=1}^{\tilde d}\int_{s-\varepsilon}^s{\tilde \sigma}^m_k(r,\E[\alpha(X_r)],{\tilde X}^{s-\varepsilon,x}_r)d{\tilde W}^m_r\right)\bigg]\\&=\E\bigg[\int_{s-\varepsilon}^sa_{jk}(r,\E[\alpha(X_r)],{\tilde X}^{s-\varepsilon,x}_r)dr\bigg],
\end{align*} we deduce that
\begin{align*}
   &\bigg|\frac{u_{t,\psi}(s,x)- u_{t,\psi}(s-\varepsilon,x)}{\varepsilon}+\nabla_x u_{t,\psi}(s,x).b(s,\E[\alpha(X_s)],x)+\frac{1}{2}\sum_{j,k=1}^n\partial_{jk}u_{t,\psi}(s,x)a_{jk}(s,\E[\alpha(X_s)],x)\bigg|\\
&\le C\bigg(\left|b(s,\E[\alpha(X_s)],x)-\frac{1}{\varepsilon}\E\left[\int_{s-\varepsilon}^sb(r,\E[\alpha(X_r)],{\tilde X}^{s-\varepsilon,x}_r)dr\right]\right|+\left|a(s,\E[\alpha(X_s)],x)-\frac{1}{\varepsilon}\E\left[\int_{s-\varepsilon}^sa(r,\E[\alpha(X_r)],{\tilde X}^{s-\varepsilon,x}_r)dr\right]\right|\\&+\frac{\E[|{\tilde X}^{s-\varepsilon,x}_s-x|^3]}{\varepsilon}+\frac{1}{\varepsilon}\sum_{j,k=1}^n\E\bigg[\bigg|\int_{s-\varepsilon}^sb_j(r,\E[\alpha(X_r)],{\tilde X}^{s-\varepsilon,x}_r)dr\sum_{m=0}^{\tilde d}\int_{s-\varepsilon}^s{\tilde \sigma}^m_k(r,\E[\alpha(X_r)],{\tilde X}^{s-\varepsilon,x}_r)d{\tilde W}^m_r \bigg|\bigg]\bigg).
\end{align*}
By continuity and uniform integrability of $r\mapsto [{\tilde \sigma},b](r,\E[\alpha(X_r)],{\tilde X}^{s-\varepsilon,x}_r)$ on $[s-\varepsilon,T]$, the two first terms in the right-hand side converge to $0$ as $\varepsilon\to 0$. Moreover the expectations in the two last terms are smaller than $C\varepsilon^{3/2}$. Taking into account the continuity with respect to $s$ of $\nabla_x u_{t,\psi}(s,x).b(s,\E[\alpha(X_s)],x)+\frac{1}{2}\sum_{j,k=1}^n\partial_{jk}u_{t,\psi}(s,x)a_{ik}(s,\E[\alpha(X_s)],x)$, we conclude that $u_{t,\psi}$ is a classical solution of the Feynman-Kac PDE \eqref{fk}.
\end{proof}

We are now ready to prove Theorem \ref{thmbias}.
\begin{proof}[Proof of Theorem \ref{thmbias}]
Let $t\in[0,T]$. Applying It\^{o}'s formula and taking into account the Feynman-Kac PDE \eqref{fk}, we obtain that
\begin{align*}
   u_{t,\psi}(t,X^1_t)&-u_{t,\psi}(t,X^{1,N,h}_t)=u_{t,\psi}(0,X^1_0)-u_{t,\psi}(0,X^1_0)+\int_0^t\nabla_x u_{t,\psi}(s,X^1_s).\sigma(s,\E[\alpha(X_s)],X^1_s)dW^1_s\\&-\int_0^t\nabla_x u_{t,\psi}(s,X^{1,N,h}_s).\sigma\left(\tau^h_s,\frac{1}{N}\sum_{i=1}^N\alpha(X^{i,N,h}_{\tau^h_s}),X^{1,N,h}_{\tau^h_s}\right)dW^1_s\\
&+\int_0^t\nabla_x u_{t,\psi}(s,X^{1,N,h}_s).\left(b\left(s,\E[\alpha(X_s)],X^{1,N,h}_s\right)-b\left({\tau^h_s},\frac{1}{N}\sum_{i=1}^N\alpha(X^{i,N,h}_{\tau^h_s}),X^{1,N,h}_{\tau^h_s}\right)\right)ds\\&+\frac{1}{2}\sum_{j,k=1}^n\int_0^t
\partial_{jk}u_{t,\psi}(s,X^{1,N,h}_s)\left(a_{jk}(s,\E[\alpha(X_s)],X^{1,N,h}_s)-a_{jk}\bigg({\tau^h_s},\frac{1}{N}\sum_{i=1}^N\alpha(X^{i,N,h}_{\tau^h_s}),X^{1,N,h}_{\tau^h_s}\bigg)\right)ds.
\end{align*}
Integrability deduced from Propositions \ref{propsznit} and \ref{propregufk} and the properties of $\sigma$ ensure that the expectations of the stochastic integrals vanish. Therefore, setting for $s\in[0,t]$
\begin{align*}e^\psi(s)&:=\E\bigg[\nabla_x u_{t,\psi}(s,X^{1,N,h}_s).\left(b\left(s,\E[\alpha(X_s)],X^{1,N,h}_s\right)-b\left(\tau^h_s,\frac{1}{N}\sum_{i=1}^N\alpha(X^{i,N,h}_{\tau^h_s}),X^{1,N,h}_{\tau^h_s}\right)\right)\bigg]\\
   e_{jk}^\psi(s)&:=\E\bigg[
\partial_{jk}u_{t,\psi}(s,X^{1,N,h}_s)\left(a_{jk}(s,\E[\alpha(X_s)],X^{1,N,h}_s)-a_{jk}\left(\tau^h_s,\frac{1}{N}\sum_{i=1}^N\alpha(X^{i,N,h}_{\tau^h_s}),X^{1,N,h}_{\tau^h_s}\right)\right)\bigg],\;1\le j,k\le n,
\end{align*}
and using that $u_{t,\psi}(t,.)=\psi(.)$, we obtain
\begin{align}
   \E\left[\psi(X_t)\right]&-\E[\psi(X^{1,N,h}_t)]=\int_0^t\bigg(e^\psi(s)+\frac{1}{2}\sum_{j,k=1}^n e_{jk}^\psi(s)\bigg)ds.\label{difesp}
\end{align}
Let us now estimate $e_{jk}^\psi(s)$ :
\begin{align*}
   e_{jk}^\psi(s)&=\E\bigg[\partial_{jk}u_{t,\psi}(s,X^{1,N,h}_s)\left(a_{jk}(s,\E[\alpha(X_s)],X^{1,N,h}_s)-a_{jk}(\tau^h_s,\E[\alpha(X_{\tau^h_s})],X^{1,N,h}_s)\right)\bigg]\\&+\E\bigg[\left(\partial_{jk}u_{t,\psi}(s,X^{1,N,h}_s)-\partial_{jk}u_{t,\psi}(s,X^{1,N,h}_{\tau^h_s})\right)\left(a_{jk}(\tau^h_s,\E[\alpha(X_{\tau^h_s})],X^{1,N,h}_s)-a_{jk}(\tau^h_s,\E[\alpha(X_{\tau^h_s})],X^{1,N,h}_{\tau^h_s})\right)\bigg]\\&+\E\bigg[\partial_{jk}u_{t,\psi}(s,X^{1,N,h}_{\tau^h_s})\left(a_{jk}(\tau^h_s,\E[\alpha(X_{\tau^h_s})],X^{1,N,h}_s)-a_{jk}(\tau^h_s,\E[\alpha(X_{\tau^h_s})],X^{1,N,h}_{\tau^h_s})\right)\bigg]\\&+\E\bigg[\partial_{jk}u_{t,\psi}(s,X^{1,N,h}_{s})\left(a_{jk}(\tau^h_s,\E[\alpha(X_{\tau^h_s})],X^{1,N,h}_{\tau^h_s})-a_{jk}\left(\tau^h_s,\E[\alpha(X^{1,N,h}_{\tau^h_s})],X^{1,N,h}_{\tau^h_s}\right)\right)\bigg]\\&+\E\bigg[\partial_{jk}u_{t,\psi}(s,X^{1,N,h}_{s})\left(a_{jk}(\tau^h_s,\E[\alpha(X^{1,N,h}_{\tau^h_s})],X^{1,N,h}_{\tau^h_s})-a_{jk}\left(\tau^h_s,\frac{1}{N}\sum_{i=1}^N\alpha(X^{i,N,h}_{\tau^h_s}),X^{1,N,h}_{\tau^h_s}\right)\right)\bigg].
\end{align*}
Let us respectively denote by $e_{jk}^{1,\psi}(s)$, $e_{jk}^{2,\psi}(s)$, $e_{jk}^{3,\psi}(s)$, $e_{jk}^{4,\psi}(s)$ and $\bar{e}_{jk}^{\psi}(s)$ the five terms in the right-hand side. Since, by Proposition \ref{propregufk}, $\partial_{jk}u_{t,\psi}$ is bounded by a finite constant $M^{(2)}_\psi$ not depending in $t$ and $a$ is Lipschitz continuous with constant $L_a$, $|e_{jk}^{1,\psi}(s)|\le M^{(2)}_\psi L_a\left((s-\tau^h_s)+|\E[\alpha(X_s)]-\E[\alpha(X_{\tau^h_s})]|\right)$. Since $\alpha$ is ${\mathcal C}^2$ with bounded derivatives and Lipschitz continuous second order derivatives, for $0\le r\le s\le T$, computing $\alpha(X_s)-\alpha(X_{r})$ by It\^o's formula, taking expectations and remarking that the expectation of the stochastic integral vanishes, we obtain the existence of a finite constant $C$ not depending on $r$ and $s$ such that $|\E[\alpha(X_s)]-\E[\alpha(X_{r})]|\le C (s-r)$. Hence
$|e_{jk}^{1,\psi}(s)|\le M^{(2)}_\psi L_a(1+C)(s-\tau^h_s)\le M^{(2)}_\psi L_a(1+C)h$.

Since, by Proposition \ref{propregufk}, $\partial_{jk}u_{t,\psi}$ is Lipschitz continuous in space with constant $L^{(2)}_\psi$ not depending on $t$ and $a_{jk}$ is Lipschitz continuous in space with constant $L_a$ we have $|e_{jk}^{2,\psi}(s)|\le L^{(2)}_\psi L_a\E[|X^{1,N,h}_s-X^{1,N,h}_{\tau^h_s}|^2]$. Combining the fact that $\sup_{N\ge 1,h\ge 0,s\in[0,T]}\E[|X^{1,N,h}_s|^2]<\infty$ deduced from from Proposition \ref{propsznit}, the Lipschitz continuity of $\alpha$ and the affine growth of $\sigma,b$ in their $p+n$ last variables uniform in their first variable, one easily checks that there is a finite constant $C$ not depending on $N$ and $h$ such that for all $0\le r\le s\le T$, $\E[|X^{1,N,h}_s-X^{1,N,h}_{r}|^2]\le C(s-r)$ and deduce that $|e_{jk}^{2,\psi}(s)|\le L^{(2)}_\psi L_a Ch$.

Remarking that the first and second order derivatives of $a_{jk}$ with respect to its $n$ last variables have affine growth under our assumptions, computing $a_{jk}(\tau^h_s,\E[\alpha(X_{\tau^h_s})],X^{1,N,h}_s)-a_{jk}(\tau^h_s,\E[\alpha(X_{\tau^h_s})],X^{1,N,h}_{\tau^h_s})$ by It\^o's formula, multiplying by the random variable $\partial_{jk}u_{t,\psi}(s,X^{1,N,h}_{\tau^h_s})$ bounded by $M^{(2)}_\psi$, taking expectations and remarking that the contribution of the stochastic integral vanishes, we obtain that $|e_{jk}^{3,\psi}(s)|\le Ch$ with $C$ not depending on $N,s,t,h$. Last, $|e_{jk}^{4,\psi}(s)|\le M^{(2)}_\psi L_a\left|\E[\alpha(X_{\tau^h_s})]-\E[\alpha(X^{1,N,h}_{\tau^h_s})]\right|$. Hence there is a finite constant $C$ not depending on $N,s,t,h$ such that
\begin{equation}\label{part1ejkpsi}
 \forall 0\le s\le t\le T,\;|e_{jk}^\psi(s)|\le C\left(h+\left|\E[\alpha(X_{\tau^h_s})]-\E[\alpha(X^{1,N,h}_{\tau^h_s})]\right|\right)+|\bar{e}_{jk}^{\psi}(s)|.\end{equation} Let us now estimate $|\bar{e}_{jk}^{\psi}(s)|$. Denoting by $\nabla_2a_{jk}$ the partial gradient of $a$ with respect to its variables with indices in $\{2,\hdots,1+p\}$, we have
\begin{align*}
\bar{e}_{jk}^{\psi}(s)=&\E\bigg[\partial_{jk}u_{t,\psi}(s,X^{1,N,h}_s)\bigg(\E[\alpha(X^{1,N,h}_{\tau^h_s})]-\frac{1}{N}\sum_{i=1}^N\alpha(X^{i,N,h}_{\tau^h_s})\bigg)\\
   &\phantom{\E\bigg[}.\int_0^1 \left\{\nabla_2a_{jk}\bigg({\tau^h_s},v\E[\alpha(X_{\tau^h_s}^{1,N,h})]+\frac{1-v}{N}\sum_{i=1}^N\alpha(X^{i,N,h}_{\tau^h_s}),X^{1,N,h}_{\tau^h_s}\bigg)-\nabla_2a_{jk}({\tau^h_s},\E[\alpha(X^{1,N,h}_{\tau^h_s})],X^{1,N,h}_{\tau^h_s})\right\}dv\bigg]\\&+\E\bigg[\bigg(\E[\alpha(X^{1,N,h}_{\tau^h_s})]-\frac{1}{N}\sum_{i=1}^N\alpha(X^{i,N,h}_{\tau^h_s})\bigg).\partial_{jk}u_{t,\psi}(s,X^{1,N,h}_{s})\nabla_2a_{jk}({\tau^h_s},\E[\alpha(X^{1,N,h}_{\tau^h_s})],X^{1,N,h}_{\tau^h_s})\bigg].
\end{align*}
Let us respectively denote by $e_{jk}^{5,\psi}(s)$ and $e_{jk}^{6,\psi}(s)$ the two terms in the right-hand side.
By Proposition \ref{propregufk}, $\partial_{jk}u_{t,\psi}$ is bounded by $M^{(2)}_\psi$ and Lipschitz continuous in space with constant $L^{(2)}_\psi$ with $(M^{(2)}_\psi,L^{(2)}_\psi)$ not depending on $t$. By assumption $\nabla_2 a_{jk}$ is bounded by $M^{(2)}_a$ and  Lipschitz continuous in its $p+n$ last variables with constant $L^{(2)}_{a}$. Therefore, 
\begin{align}
   \frac{1}{M^{(2)}_\psi L^{(2)}_{a}}&|e_{jk}^{5,\psi}(s)|\le \frac{1}{2}\E\bigg[\bigg|\frac{1}{N}\sum_{i=1}^N\alpha(X^{i,N,h}_{\tau^h_s})-\E[\alpha(X_{\tau^h_s}^{1,N,h})]\bigg|^2\bigg]\notag\\&\le \E\bigg[\bigg|\frac{1}{N}\sum_{i=1}^N(\alpha(X^{i,N,h}_{\tau^h_s})-\alpha(X^{i,h}_{\tau^h_s}))-\E[\alpha(X^{1,N,h}_{\tau^h_s})-\alpha(X^{1,h}_{\tau^h_s})]\bigg|^2\bigg]+\E\bigg[\bigg|\frac{1}{N}\sum_{i=1}^N\alpha(X^{i,h}_{\tau^h_s})-\E[\alpha(X^{1,h}_{\tau^h_s})]\bigg|^2\bigg].\notag
  \end{align}
 
The second term in the right-hand side of the inequality is the variance of the empirical mean of i.i.d. random variables. It is therefore equal to $ \frac{1}{N}\left(\E[|\alpha(X^{1,h}_{\tau^h_s})|^2]-|\E[\alpha(X^{1,h}_{\tau^h_s})]|^2\right)$. The first term is also a variance and we upper-bound it by the corresponding second order moment, which is not greater than $\frac{1}{N}\sum_{i=1}^N\E\left[\left|\alpha(X^{i,h}_{\tau^h_s})-\alpha(X^{i,N,h}_{\tau^h_s})\right|^2\right]$ according to Jensen's inequality for the empirical mean. Therefore, denoting by $L_\alpha$ the Lipschitz constant of $\alpha$, we have 

 \begin{align}
 |e_{jk}^{5,\psi}(s)| &\le M^{(2)}_\psi L^{(2)}_{a} \left(\frac{1}{N}\sum_{i=1}^N\E\left[\left|\alpha(X^{i,h}_{\tau^h_s})-\alpha(X^{i,N,h}_{\tau^h_s})\right|^2\right]+\frac{1}{N}\left(\E[|\alpha(X^{1,h}_{\tau^h_s})|^2]-|\E[\alpha(X^{1,h}_{\tau^h_s})]|^2\right) \right) \le L_\alpha^2\; M^{(2)}_\psi L^{(2)}_{a} \frac{C}{N},\label{estimoyempesp}
\end{align}
with $C$ finite and not depending on $N,s,t$ according to Proposition \ref{propsznit}.
Since $\E[\alpha(X^{1,N,h}_{\tau^h_s})]-\frac{1}{N}\sum_{i=1}^N\alpha(X^{i,N,h}_{\tau^h_s})$ is centered and by exchangeability of $(X^{i,N,h})_{1\le i\le N}$, we may replace  $\partial_{jk}u_{t,\psi}(s,X^{1,N,h}_{s})\nabla_2a_{jk}(\tau^h_s,\E[\alpha(X^{1,N,h}_{\tau^h_s})],X^{1,N,h}_{\tau^h_s})$ by $\frac{1}{N}\sum_{i=1}^N\partial_{jk}u_{t,\psi}(s,X^{i,N,h}_{s})\nabla_2a_{jk}(\tau^h_s,\E[\alpha(X^{1,N,h}_{\tau^h_s})],X^{i,N,h}_{\tau^h_s})-\E[\partial_{jk}u_{t,\psi}(s,X^{1,N,h}_{s})\nabla_2a_{jk}(\tau^h_s,\E[\alpha(X^{1,N,h}_{\tau^h_s})],X^{1,N,h}_{\tau^h_s})]$ in the expectation defining $e_{jk}^{6,\psi}(s)$. With the Cauchy-Schwarz inequality, we deduce that
\begin{align*}
   &|e_{jk}^{6,\psi}(s)|\le\left(\E\bigg[\bigg|\frac{1}{N}\sum_{i=1}^N\alpha(X^{i,N,h}_{\tau^h_s})-\E[\alpha(X_{\tau^h_s})]\bigg|^2\bigg]\right)^{1/2}\Bigg(\E\bigg[\bigg|\frac{1}{N}\sum_{i=1}^N\partial_{jk}u_{t,\psi}(s,X^{i,N,h}_{s})\nabla_2a_{jk}(\tau^h_s,\E[\alpha(X^{1,N,h}_{\tau^h_s})],X^{i,N,h}_{\tau^h_s})\\&\phantom{|e_{jk}^{6,\psi}(s)|\le\E^{1/2}\bigg[\bigg|\frac{1}{N}\sum_{i=1}^N\alpha(X^{i,N,h}_{\tau^h_s})-\E[\alpha(X_{\tau^h_s})]\bigg|^2\bigg]\E^{1/2}}-\E\left[\partial_{jk}u_{t,\psi}(s,X^{1,N,h}_{s})\nabla_2a_{jk}(\tau^h_s,\E[\alpha(X^{1,N,h}_{\tau^h_s})],X^{1,N,h}_{\tau^h_s})\right]\bigg|^2\bigg]\Bigg)^{1/2}.
\end{align*}
Since $\R^n\ni x\mapsto\partial_{jk}u_{t,\psi}(s,x)$ (resp. $\R^n\ni x\mapsto\nabla_2a_{jk}(\tau^h_s,\E[\alpha(X^{1,N,h}_{\tau^h_s})],x)$) is bounded by $M^{(2)}_\psi$ (resp. $M^{(2)}_a$) and 
Lipschitz continuous with constant $L^{(2)}_\psi$ (resp. $L^{(2)}_a$), reasoning like in the derivation of \eqref{estimoyempesp}, we obtain that the second factor in the right-hand side is smaller than $(M^{(2)}_\psi L^{(2)}_a+M^{(2)}_a L^{(2)}_\psi)\frac{C}{\sqrt{N}}$ so that $|e_{jk}^{6,\psi}(s)|\le L_\alpha(M^{(2)}_\psi L^{(2)}_a+M^{(2)}_a L^{(2)}_\psi)\frac{C}{N}$
with $C$ finite and not depending on $N,s,t,\psi$. With \eqref{part1ejkpsi}, we deduce that
$$\forall 0\le s\le t\le T,\;|e_{jk}^\psi(s)|\le C\left(\frac{1}{N}+h+\left|\E[\alpha(X_{\tau^h_s})]-\E[\alpha(X^{1,N,h}_{\tau^h_s})]\right|\right).$$
Estimating $e^\psi(s)$ in a similar way and using \eqref{difesp}, we deduce the existence of a finite constant $C$ not depending on $N$ and $h$ such that
\begin{equation}
  \forall t\in[0,T],\;\sup_{s\in[0,t]}\left|\E\left[\psi(X_s)\right]-\E[\psi(X^{1,N,h}_s)]\right|\le C\left(\frac{1}{N}+h\right)+C\int_0^t\left|\E[\alpha(X_{\tau^h_s})]-\E[\alpha(X^{1,N,h}_{\tau^h_s})]\right|ds.\label{difpsialp}
\end{equation}
Summing for $j\in\{1,\hdots,p\}$ this inequality applied with $\psi$ equal to the $j$-th coordinate of $\alpha$, remarking that Proposition \ref{propsznit} and the Lipschitz continuity of $\alpha$ ensure that $\sup_{t\in[0,T]}\left|\E[\alpha(X^{i,N,h}_t)]-\E[\alpha(X_t)]\right|<\infty$ and applying Gronwall's lemma, we deduce that $\sup_{t\in[0,T]}\left|\E[\alpha(X_t)]-\E[\alpha(X^{1,N,h}_t)]\right|\le C\left(\frac{1}{N}+h\right)$. We conclude by combining this estimation and \eqref{difpsialp}.
\end{proof}

\begin{remark}
\begin{itemize}
   \item A careful look at the proof shows that the Lipschitz continuity of $a$ and $b$ in the time variable is not needed to obtain that $\sup_{t\in[0,T]}|\E[\psi(X^{1,N}_t)]-\E[\psi(X_t)]|\le \frac{C}{N}$. Indeed, this property is only used to estimate $e^{1,\psi}_{jk}(s)$ which vanishes when $h=0$ since $\tau^0_s=s$. Moreover, if $a$ and $b$ are only H\"older continuous with exponent $\alpha\in (0,1)$ in the time variable, then the above estimation deteriorates to $\sup_{t\in[0,T]}|\E[\psi(X^{1,N,h}_t)]-\E[\psi(X_t)]|\le C\left(\frac{1}{N}+h^\alpha\right)$.
 \item On the other hand, when there is no nonlinearity in the sense of McKean in the SDE \eqref{edsnl} ($p=0$), we obtain that the weak order of convergence of the Euler scheme is $1$ under mere global Lipschitz continuity of $a,b$, continuity of $\sigma,b$ and ${\mathcal C^2}$ regularity in space  of $\sigma,b,\psi$ with bounded and Lipschitz derivatives. The key step is that the decomposition of $e^{2,\psi}_{jk}(s)+e^{3,\psi}_{jk}(s)$ avoids to differentiate the solution to the Feynman-Kac partial differential equation more than two times in space and only requires Lipschitz continuity of the second order spatial derivatives.
\end{itemize}
\end{remark}

\section{\label{sec:num}Numerical Experiments }

 We conduct two types of numerical tests. First, we estimate the bias using regular Monte-Carlo for examples of one dimensional ($n=1$) mean-field SDEs taken from \cite{KT16} to provide numerical evidence of the $\mathcal{O}(N^{-1})$ behaviour of the bias for a fixed value of the time step h. Then we present the antithetic sampling results on these same examples.
 
The code for running these experiments has a number of iterations as an input parameter. This latter is useful to observe the behaviour of the bias when increasing the number of particles. Therefore, we give an initial number of particles  that we multiply by two from an iteration to the other. Except for the polynomial drift and the plane rotator examples where it is respectively equal to $8$ and $4$, the number of iterations chosen is equal to five and the initial number of particles is twenty so that the final number of particles is $320$. The simulation is done with $5.10^6$ runs except for the Plane rotator example where we push further the number of Monte Carlo runs up to $4.9 \times 10^8$. 

We also define the precision as half the width of the $95\%$ confidence interval of the empirical mean  i.e. \textit{Precision $= 1.96 \times \sqrt[]{\frac{\text{Variance}}{\text{number of runs}}}$} where \textit{Variance} denotes the empirical variance over the runs of the empirical mean over the particles.

In order to measure the relevance of the antithetic sampling technique for variance reduction, we compute the variance of the difference between the empirical mean of the system with $2N$ particles and the mean of the empirical means of the two independent systems with N particles:
\begin{align}\label{AntiVar}
    \text{Var}\left[ \frac{1}{2N} \sum \limits_{i=1}^{2N}  \psi(X^{i,2N}_T) - \frac{1}{2N} \sum \limits_{i=1}^N  \left( \psi(X^{i,N}_T) + \psi(Y^{i,N}_T) \right) \right].
\end{align}
Here $(Y^{i,N}_t)_{1 \leq i \leq N}$ is the system with $N$ particles driven by $(X^{i}_0,W^{i})_{N+1 \leq i \leq 2N}$.

\subsection{Generalised Ornstein-Uhlenbeck process}
\subsubsection*{Model}
We consider the following generalization of the Ornstein-Uhlenbeck SDE to a linear mean-field SDE:
\begin{align*}
	dX_t = \Big[ \gamma X_t + \beta \mathbb{E}[X_t]\Big]dt + \upsilon dW_t \text{,  with  } X(0) = x_0
\end{align*}
for parameters $\gamma$, $\beta$, $\upsilon$ $\in$  $\mathbb{R}$ and initial data $x_0$ $\in \mathbb{R}$. 

The functions $\alpha(x) = x$, $b(t,y, x) = \gamma x + \beta y$ and $\sigma(t,y, x) = \upsilon$ therefore satisfy the hypotheses of regularity of Theorem \ref{thmbias}.

The first and second moments of $X_t$ are respectively given by $\mathbb{E}[X_t] = x_0 \exp\Big( (\gamma + \beta)t\Big)$ and $\mathbb{E}[X_t^2] =  x_{0}^2 \exp\Big( 2(\gamma + \beta) t\Big) + \frac{\upsilon^2}{2\gamma}\Big(\exp(2\gamma t) - 1 \Big)$.

The associated particle approximation of the SDE is given by the following system:
\begin{align*}
	d{X}^{i,N}_t =  \Big[ \gamma {X}^{i,N}_t + \frac{\beta}{N} \sum \limits_{j=1}^N {X}^{j,N}_t \Big]dt + \upsilon dW^{i}_t  \; , \; 1 \leq i \leq  N \; \text{with } X^{i,N}_0 = x_0
\end{align*}
where $\Big((W^{i}_t)_{t \geq 0}\Big)_{1 \leq i \leq N}$ are independent Brownian motions.

Because of the linearity of the drift coefficient, it is possible to obtain closed form expressions for $\mathbb{E}[X^{1,N,h}_t]$ and $\mathbb{E}[(X^{1,N,h}_t)^2]$ and deduce the asymptotic behaviour of the biases of the first and second order moments as $N \to \infty$ and $h \to 0$. 

Let $h=T/K$ for some $K\in\N^*$. For $k\in\{0,\hdots,K-1\}$ and $i\in\{1,\hdots,N\}$, we have
\begin{align*}
   X^{i,N,h}_{(k+1)h}&=(1+\gamma h)X^{i,N,h}_{kh}+\beta h\bar{X}^{N,h}_{kh}+v(W^i_{(k+1)h}-W^i_{kh})\mbox{ with }\bar{X}^{N,h}_{kh}=\frac{1}{N}\sum_{j=1}^NX^{j,N,h}_{kh}.\end{align*}
Hence $\E[X^{i,N,h}_{kh}]=(1+(\gamma+\beta) h)^kx_0$ so that 
$$\E[X_T]-\E[X^{i,N,h}_{T}]=\frac{1}{2}
{(\gamma+\beta)^2Te^{(\gamma+\beta)T}}h x_0 +\mathcal{O}(h^2) \;\; \text{as $h\to 0$.} $$ 

To compute $\E[(X^{1,N,h}_{kh})^2]$, we remark that for $k\in\{0,\hdots,K-1\}$\begin{align*}
\E[(X^{1,N,h}_{(k+1)h})^2]&=(1+\gamma h)^2\E[(X^{1,N,h}_{kh})^2]+(2(1+\gamma h)+\beta h)\beta h\left(\frac{1}{N}\E[(X^{1,N,h}_{kh})^2]+\frac{N-1}{N}\E[X^{1,N,h}_{kh}X^{2,N,h}_{kh}]\right)+v^2h\\
\E[X^{1,N,h}_{(k+1)h}X^{2,N,h}_{(k+1)h}]&=(1+\gamma h)^2\E[X^{1,N,h}_{kh}X^{2,N,h}_{kh}]+(2(1+\gamma h)+\beta h)\beta h\left(\frac{1}{N}\E[(X^{1,N,h}_{kh})^2]+\frac{N-1}{N}\E[X^{1,N,h}_{kh}X^{2,N,h}_{kh}]\right)\end{align*}
Since $X^{i,N,h}_0=x_0$ for all $i\in\{1,\hdots,N\}$, subtracting the two last equations, we obtain
\begin{align*}
   \forall k\in\{1,\hdots, K\},\;\E[(X^{1,N,h}_{kh})^2]-\E[X^{1,N,h}_{kh}X^{2,N,h}_{kh}]=\frac{(1+\gamma h)^{2k}-1}{2\gamma +\gamma^2h}\times v^2
\end{align*} and deduce that
$$\E[(X^{1,N,h}_{(k+1)h})^2]=(1+(\gamma+\beta)h)^2\E[(X^{1,N,h}_{kh})^2]+(2(1+\gamma h)+\beta h)\beta \frac{1-N}{N}\times \frac{(1+\gamma h)^{2k}-1}{2\gamma +\gamma^2h}\times v^2h+v^2h.$$
We conclude that
\begin{align*}
   \E[(X^{1,N,h}_{kh})^2]=&(1+(\gamma+\beta)h)^{2k}x_{0}^2+\frac{(1+(\gamma+\beta) h)^{2k}-1}{2(\gamma+\beta)+(\gamma+\beta)^2h}\left(1+\frac{N-1}{N}\times\frac{2\beta+(2\beta \gamma +\beta^2)h}{2\gamma +\gamma^2h}\right)v^2\\&+\frac{1-N}{N}\times\frac{(1+(\gamma+\beta) h)^{2k}-(1+\gamma h)^{2k}}{2\gamma +\gamma^2h}\times v^2\\
=&(1+(\gamma+\beta)h)^{2k}x_{0}^2+\frac{N-1}{N}\times\frac{(1+\gamma h)^{2k}-1}{2\gamma +\gamma^2h}\times v^2+\frac{1}{N}\times\frac{(1+(\gamma+\beta) h)^{2k}-1}{2(\gamma+\beta)+(\gamma+\beta)^2h}\times v^2
\end{align*}
so that $\E[(X_T)^2]-\E[(X^{1,N,h}_{T})^2]=\left((\gamma+\beta)^2Te^{2(\gamma+\beta)T}x_{0}^2+\frac{e^{2\gamma T}-1+2\gamma T e^{2\gamma T}}{4}v^2\right)h+\left(\frac{e^{2\gamma T}-1}{2\gamma}+\frac{1-e^{2(\gamma+\beta)T}}{2(\gamma+\beta)}\right)\frac{v^2}{N}+{\mathcal O}(h^2+\frac{h}{N}+\frac{1}{N^2})$ as $h\to 0$ and $N\to\infty$. Moreover,
$$\E[(X^h_T)^2]-\E[(X^{1,N,h}_{T})^2]=\frac{1}{N}\left(\frac{(1+\gamma h)^{2k}-1}{2\gamma +\gamma^2h}+\frac{1-(1+(\gamma+\beta) h)^{2k}}{2(\gamma+\beta)+(\gamma+\beta)^2h}\right)\times v^2.$$
 
The bias of the time discretized second order moment is exactly of order $1$ in  $\frac{1}{N}$.

\subsubsection*{Results}

In order to illustrate the behaviour of the first and second order moments, we compute the difference between the closed-form discretized moments and the estimated moments and expect the difference to be null up to the statistical error. The results are shown in Tables \ref{tab_OU_1} and \ref{tab_OU_SM} below where we denote by "Difference" that entity. As a test case, we use this model with $\gamma =\frac{1}{2}$, $\beta =\frac{4}{5}$, $\upsilon^2 = \frac{1}{2}$ and $x_{0}=1$. 

\newpage

Concerning the first order moment, we proved above that the bias does not depend on the number $N$ of particles, which is what is observe in the first row of the table below. 
\begin{table}[!h]
	\begin{center}
			\begin{tabular}{|c|c|c|c|c|c|} 
 			\hline
            Nb. particles & 20 & 40 & 80 & 160 & 320 \\ 
 			\hline
 			Estimated first moment & 1.34862 &	1.34861 & 1.34869 &	1.34866 & 1.34866 \\
            \hline
  			Difference & 0.00003 & 0.00004 & -0.00004 & -0.00001	& -0.00001 \\
 			\hline
  			Precision & 0.00016 & 0.00011 & 0.00008 & 0.00006 &	0.00004 \\
 			\hline
		\end{tabular}
        \caption{Generalised Ornstein-Uhlenbeck SDE: Comparison of the estimated first moments with the closed-form discretized value $1.34865$ as well as the associated precision when increasing the number of particles for a number of $5.10^{6}$ runs, $50$ time steps and a time horizon T=1.}
		\label{tab_OU_1}
	\end{center}
\end{table}

As for the second order moment, from the third row we observe that the estimation fits the closed-form discretized value which confirms the behaviour of the bias of order $1$ in $\frac{1}{N}$. 
\begin{table}[!h]
	\begin{center}
		\begin{tabular}{|c|c|c|c|c|c|} 
 			\hline
            Nb. particles & 20 & 40 & 80 & 160 & 320 \\ 
 			\hline
            Closed-form discretized second moment &  2.15552 &	2.14648 &	2.14195	& 2.13969 &	2.13856  \\
 			\hline
 			Estimated second moment & 2.15531 &	2.14655	 & 2.14205 &	2.13970	& 2.13859 \\
            \hline
  			Difference & 0.00021	& -0.00007	& -0.00010 &	-0.00001	& -0.00003 \\
 			\hline
  			Precision & 0.00045 & 0.00032 &	0.00022 & 0.00016 &	0.00011 \\
 			\hline
		\end{tabular}
        \caption{Generalised Ornstein-Uhlenbeck SDE: Comparison of the estimated second moments with their closed-form discretized values as well as the associated precision when increasing the number of particles for a number of $5.10^{6}$ runs, $50$ time steps and a time horizon T=1.}
		\label{tab_OU_SM}
	\end{center}
\end{table}

 Concerning the antithetic sampling of the first order moment, using the linearity of the model, one exactly checks that for $k\in\{0,\hdots,K-1\}$, $\bar X^{2N, h}_{kh} = \frac{1}{2} \left( \bar X^{N, h}_{kh} + \bar Y^{N, h}_{kh} \right)$ where $\bar Y^{N, h}_{kh}$ denotes the empirical mean of the discretized system driven by $(W^{i})_{N+1 \leq i \leq 2N}$. The first row of Table \ref{tab_OU_2} confirms that the variance \eqref{AntiVar} for $\psi(x) = x$ is null.
 
 For the second order moment, we compute the variance \eqref{AntiVar} for $\psi(x) = x^2$ and observe the ratio of decrease $\frac{\text{Variance}(N/2)}{\text{Variance}(N)}$ when increasing the number of particles. The results are shown in the third and forth rows of Table \ref{tab_OU_2}.

\begin{table}[!h]
	\begin{center}
		\begin{tabular}{|c|c|c|c|c|c|} 
 			\hline
            Nb. particles & 20 & 40 & 80 & 160 & 320 \\ 
            \hline
            Variance for the first moment & 7.82256e-32 & 9.52151e-32 & 1.52652e-31 & 2.79857e-31 & 5.39704e-31\\
 			\hline
  			Precision &9.88245e-35 & 1.18553e-34 & 1.89269e-34 & 3.47065e-34 & 6.69575e-34 \\
 			\hline
            Variance for the second moment & 0.000766641 & 0.000191654 & 4.77727e-05 &1.19847e-05 & 2.99537e-06\\
 			\hline
 			Ratio of decrease & $\times$  & 4.00013 &4.01179 &3.98613 & 4.00109\\
 			\hline
  			Precision & 2.4997e-06 & 6.27208e-07 & 1.55659e-07 &3.9365e-08  &9.75389e-09 \\
 			\hline
		\end{tabular}
        \caption{Generalised Ornstein-Uhlenbeck SDE: Evolution of the antithetic variance for both $\psi(x) = x$ and $\psi(x) = x^2$  with its associated precision when increasing the number of particles for a number of $5.10^6$ runs, $50$ time steps and a time horizon of 1.}
		\label{tab_OU_2}
	\end{center}
\end{table}

For the second order moment, the ratio of successive variances is around $4$ which means that the variance of the antithetic estimator is roughly proportional to $N^{-2}$. The antithetic sampling technique therefore shows an important improvement for this diffusion.   

\subsection{Plane Rotator}

\subsubsection*{Model}

The following SDE refers to a model for coupled oscillators in the presence of noise, also known as the Kuramoto model:
\begin{align*}
 	d{X}_t &= \Big[ K \int_{\mathbb{R}} \sin(y - {X}_t)dP_t^{\mu}(y) - \sin({X}_t) \Big]dt + \sqrt[]{2k_B T} dW_t \\
 &= \Big[ K\cos({X}_t)\int_{\mathbb{R}} \sin(y)dP_t^{\mu} - K\sin({X}_t)\int_{\mathbb{R}} \cos(y)dP_t^{\mu} - \sin({X}_t) \Big]dt + \sqrt[]{2k_B T} dW_t
 \end{align*}
 where  ${X}_0$ is distributed according to $\mu$,  $P_t^{\mu}$ denotes the distribution of ${X}_t$, $K > 0$ a coupling parameter, $k_B$ the Boltzmann constant and $T$ the temperature.

The functions $\sigma(x) = \sqrt[]{2k_BT}$ and $\alpha(x) = \begin{bmatrix} \sin(x) \\ \cos(x) \end{bmatrix}$ satisfy the hypotheses of Theorem \ref{thmbias}. One may also find a function $b(t,y,x)$ coinciding with $K(\cos(x)y_1 - \sin(x)y_2) - \sin(x)$ on $ [0,T] \times [-1,1]^2 \times \mathbb{R} $ which satisfies the hypotheses even if $(x,y) \longrightarrow \cos(x)y_1 - \sin(x)y_2$ is not Lipschitz continuous.\\

 The particle system has the following dynamics:
 \begin{align*}
 	d{X}^{i,N}_t = \Big[ K \Big( \cos({X}^{i,N}_t)\frac{1}{N}\sum \limits_{j=1}^N \sin({X}^{j,N}_t) - \sin({X}^{i,N}_t)\frac{1}{N}\sum \limits_{j=1}^N \cos({X}^{j,N}_t) - \sin({X}^{i,N}_t) \Big]dt + \sqrt[]{2k_B T} dW^{i}_t 
\end{align*}
where $\Big((W^{i}_t)_{t \geq 0}\Big)_{1 \leq i \leq N}$ are independent Brownian motions.
 
 \subsubsection*{Results}

 We use this model with $K=1$, $k_{B}T = \frac{1}{8}$ and initial distribution $\mu = \mathcal{N}(\frac{\pi}{4}, \frac{3\pi}{4})$ as a test case. The reference value was computed for $2.5 \times 10^8$ Monte Carlo runs, $1000$ particles and the same input parameters as for the general estimation of the bias (time horizon $T=1$ and time step $h=50$). The value obtained is $0.737576$. The results are shown in Table \ref{tab-PR-1}.

 \begin{table}[!ht]
 	\begin{center}
 		\begin{tabular}{|c|c|c|c|c|c|} 
  			\hline
            Nb. particles & 20 & 40 & 80 & 160 \\ 
  			\hline
  			First moment error & 0.000725 & 0.000355 & 0.000175 & 0.000067 \\
  			\hline
  			Ratio of decrease & $\times$ & 2.04225 & 2.02857 & 2.61194 \\
  			\hline
   			Precision & 4.79156e-05 & 3.38946e-05 & 2.39723e-05 & 1.69533e-05  \\
  			\hline 
 		\end{tabular}
         \caption{Plane Rotator SDE: Evolution of the first order moment errors as well as the associated precision when increasing the number of particles for a number of $4.9 \times 10^{8}$ runs, $50$ time steps and a time horizon T=1.}
		\label{tab-PR-1}
 	\end{center}
 \end{table}

 These results are consistent with a bias proportional to $N^{-1}$. \\

 Table \ref{tab-PR-2} exposes the results obtained for the antithetic variance. 

 \begin{table}[!h]
 	\begin{center}
 		\begin{tabular}{|c|c|c|c|c|c|} 
  			\hline
             Nb. particles & 20 & 40 & 80 & 160  \\ 
  			\hline
  			Variance & 0.000119023 & 3.12526e-05 & 8.01086e-06 & 2.0277e-06 \\
  			\hline
  			Ratio of decrease & $\times$  & 3.80842 & 3.90128 &  3.95071 \\
 			\hline
   			Precision & 2.51877e-08  & 6.75418e-09 & 1.75258e-09 &  4.4622e-10 \\
  			\hline
 		\end{tabular}
         \caption{Plane Rotator SDE: Evolution of the antithetic variance and its associated precision when increasing the number of particles for a number of $4.9 \times 10^8$ runs, $50$ time steps and a time horizon T=1.}
    \label{tab-PR-2}
 	\end{center}
 \end{table}

 The antithetic sampling method is therefore relevant in this example since the ratio of decrease is close to four which means that the variance of the antithetic estimation is roughly proportional to $N^{-2}$. This behaviour has also been observed in Section 3.2.2 \cite{HAT}.

\subsection{Polynomial Drift}

\subsubsection*{Model}

Let us consider the following mean-field SDE:
\begin{align*}
	dX_t = \Big[ \gamma X_t + \mathbb{E}[X_t] - X_t\mathbb{E}[X_t^2] \Big]dt + X_tdW_t \; \text{ with } X(0) = x_0
\end{align*}
for a certain parameter $\gamma \in \mathbb{R} $ and initial data $x_0 \in \mathbb{R}$. 

The function $\sigma(t,y,x) = x$ satisfy the hypotheses of regularity. However, the functions $b(t,y,x) = \gamma x + y_1 - xy_2$ and $\alpha(x) = \begin{bmatrix} x \\ x^2 \end{bmatrix}$ are not Lipschitz.

From the evolution of the Euler discretization $ X^{h}_{(k+1)h} = X^{h}_{kh}\Big( (1 + \gamma h - h\mathbb{E}[(X^{h}_{kh})^2] )\Big) + h \mathbb{E}[X^{h}_{kh}] + X^{h}_{kh}\Big( W_{(k+1)h}-W_{kh}\Big) $, we deduce that:
\begin{align*}
	\mathbb{E}[X^{h}_{(k+1)h}] &= \Big( 1 + (\gamma +1)h - \mathbb{E}[(X^{h}_{kh})^2]h \Big) \mathbb{E}[X^{h}_{kh}] \\
    \mathbb{E}[(X^{h}_{(k+1)h})^2] &= \Big( 1 + \gamma h - \mathbb{E}[(X^{h}_{kh})^2] h \Big)^2 \mathbb{E}[(X^{h}_{kh})^2] + 2\Big( 1 + \gamma h - \mathbb{E}[(X^{h}_{kh})^2] h \Big)h\mathbb{E}^2[X^{h}_{kh}] + \mathbb{E}[(X^{h}_{kh})^2]h + \mathbb{E}^2[X^{h}_{kh}]h^2
\end{align*}
And we solve this system of inductive equations numerically to obtain reference values of the first and second order moments.
 
The idea here, once again, is to approximate $\mathbb{E}[X]$ and $\mathbb{E}[{X}^2]$ by their empirical means in order to define the dynamics of the particle system:
\begin{align*}
	d{X}^{i,N}_t = \Big[ \gamma {X}^{i,N}_t + \frac{1}{N}\sum \limits_{j=1}^N {X}^{j,N}_t - {X}^{i,N}_t \frac{1}{N}\sum \limits_{j=1}^N ({X}^{j,N}_t)^2 \Big]dt + {X}^{i,N}_t dW^{i}_t
\end{align*}
where $\Big((W^{i}_t)_{t \geq 0}\Big)_{1 \leq i \leq N}$ are independent Brownian motions.

\subsubsection*{Results}

We use this model with $\gamma = 2$ and $x= 1$ as a test case. The reference values obtained for the time discretized first and second moments are respectively $1.3845$ and $3.13743$ . For this example, we push the iterations further until 8 so that the final number of particles is 2560. We denote by "Ratio of decrease 1" and "Ratio of decrease 2"  the respective entities $\frac{\text{First moment error}(N/2)}{\text{First moment error}(N)}$ and $\frac{\text{Second moment error}(N/2)}{\text{Second moment error}(N)}$. The results are shown in Tables \ref{tab-PD-1} and \ref{tab-PD-2}.\\

\begin{table}[!h]
	\begin{center}
		\begin{tabular}{|c|c|c|c|c|c|c|c|c|} 
 			\hline
            Nb. particles &  40 & 80 & 160 & 320 & 640 & 1280 & 2560 \\ 
 			\hline
 			First moment error  &	-0.02597&	-0.01561&	-0.00898&	-0.00492&	-0.00261	&-0.00138	&-0.00071 \\
 			\hline
 			Ratio of decrease 1 & $\times$   & 1.66368 & 1.73831 & 1.82520 & 1.88506 & 1.89130 & 1.94366 \\
 			\hline
  			Precision  & 9.348e-05 & 7.402e-05 & 5.779e-05 & 4.431e-05 & 3.330e-05 & 2.459e-05 & 1.789e-05  \\
 			\hline
		\end{tabular}
        \caption{Polynomial Drift SDE: Evolution of the first order moment errors as well as the associated precision when increasing the number of particles for a number of $5.10^{6}$ runs, $50$ time steps and a time horizon T=1}
		\label{tab-PD-1}
	\end{center}
\end{table}

\begin{table}[!ht]
	\begin{center}
		\begin{tabular}{|c|c|c|c|c|c|c|c|c|} 
 			\hline
            Nb. particles &  40 & 80 & 160 & 320 & 640 & 1280 & 2560 \\ 
 			\hline
            Second moment error  & 0.06025 & 0.03575 & 0.02077 & 0.01157 & 0.00637 & 0.00333 & 0.00171 \\
 			\hline
 			Ratio of decrease 2 & $\times$   & 1.68531 & 1.72123 & 1.79516 & 1.81633 & 1.91291 & 1.94737 \\
 			\hline
  			Precision  & 0.0005018 & 0.0003940 & 0.0003029 & 0.0002297 & 0.0001707 & 0.0001251 & 9.063e-05  \\
 			\hline
		\end{tabular}
        \caption{Polynomial Drift SDE: Evolution of the second order moment errors as well as the associated precision when increasing the number of particles for a number of $5.10^{6}$ runs, $50$ time steps and a time horizon T=1}
		\label{tab-PD-2}
	\end{center}
\end{table}

\newpage 

We observe that the ratios of decrease of both the first and the second order moment errors seem to grow as the number of particle increases. It confirms the behaviour of the bias in $\mathcal{O}(\frac{1}{N})$ of Theorem \ref{thmbias} and even tends towards a behaviour of order $1$ in $\frac{1}{N}$ when the number of particle is large.\\

In order to measure the relevance of the antithetic sampling, we compute the variance \eqref{AntiVar} for the first and second order moments.  We denote by "Ratio of decrease V1" and "Ratio of decrease V2"  the respective entities $\frac{\text{Variance for first moment error}(N/2)}{\text{Variance for first moment error}(N)}$ and $\frac{\text{Variance for Second moment error}(N/2)}{\text{Variance for Second moment error}(N)}$. The results are shown in Tables \ref{tab-PD-3} and \ref{tab-PD-4}.

\begin{table}[!h]
	\begin{center}
		\begin{tabular}{|c|c|c|c|c|c|c|c|c|} 
 			\hline
            Nb. particles & 40 & 80 & 160 & 320 & 640 & 1280 & 2560 \\ 
 			\hline
 			Variance for the first moment  & 0.0008910 & 0.0004406 & 0.0001930 & 7.474e-05 & 2.584e-05 & 8.000e-06 & 2.262e-06 \\
 			\hline
 			Ratio of decrease V1 & $\times$   & 2.023 & 2.283 & 2.582 & 2.892 & 3.230  & 3.537\\
 			\hline
  			Precision  & 3.621e-06 & 2.260e-06 & 1.312e-06 & 6.650e-07 & 3.278e-07 & 1.294e-07 & 7.131e-08\\
 			\hline
		\end{tabular}
        \caption{Polynomial Drift SDE: Evolution of the antithetic variance for $\psi(x) = x$ with its associated precision when increasing the number of particles for a number of $5.10^{6}$ runs, $50$ time steps and a time horizon T=1.}
		\label{tab-PD-3}
	\end{center}
\end{table}

\begin{table}[!h]
	\begin{center}
		\begin{tabular}{|c|c|c|c|c|c|c|c|c|} 
 			\hline
            Nb. particles  & 40 & 80 & 160 & 320 & 640 & 1280 & 2560 \\ 
 			\hline
Variance for the second moment  & 0.02380 & 0.01078 & 0.00440 & 0.00170 & 0.00058 & 0.00018 & 5.282e-05  \\
 			\hline
 			Ratio of decrease V2 & $\times$   & 2.2074 & 2.45094 & 2.60197  & 2.89779 & 3.20462 & 3.44723 \\
 			\hline
  			Precision  & 0.000113 & 6.672e-05 & 3.597e-05 &  2.415e-05 & 1.125e-05 & 5.042e-06 & 2.862e-06 \\
 			\hline
		\end{tabular}
        \caption{Polynomial Drift SDE: Evolution of the antithetic variance for $\psi(x) = x^2$  with its associated precision when increasing the number of particles for a number of $5.10^{6}$ runs, $50$ time steps and a time horizon T=1.}
		\label{tab-PD-4}
	\end{center}
\end{table}

The results exposed in Tables \ref{tab-PD-3} and \ref{tab-PD-4} both show that when increasing the number of particles, the ratios of decrease grow gradually. Therefore, the antithetic sampling method may be relevant when simulating with a large number of particles. 

\subsection{Viscous Burgers equation}

\subsubsection*{Model}

Let us consider the following mean-field SDE for a parameter $\upsilon > 0$:
\begin{align*}
	dX_t = \bar{F}_t(X_t)dt + \upsilon dW_t 
\end{align*}
where $\bar{F}_t(x) = \mathbb{P}(X_t \geq x)$.

Using the Fokker-Planck equation satisfied by the density of $X_t$, we show  that $\bar{F}_t(x)$ is solution to the viscous Burgers equation:

		\begin{align*}
			\partial_t V(t,x) = \frac{\upsilon^2}{2} \partial_{xx}V(t,x) - V(t,x)\partial_x V(t,x) 
		\end{align*}

We now suppose that the initial condition is $X(0) = 0$ so that $\bar{F}_0(x) = \mathbf{1}_{\{x \leq 0\}}$. Then the Cole-Hopf transformation gives:
\begin{align*}
	\bar{F}_t(x) = \frac{\mathcal{N}\Big(\frac{t-x}{\upsilon \sqrt[]{t}}\Big)}{\exp\Big( \frac{2x-t}{2\upsilon^2} \Big)\mathcal{N}\Big(\frac{x}{\upsilon \sqrt[]{t}}\Big)+ \mathcal{N}\Big(\frac{t-x}{\upsilon \sqrt[]{t}}\Big)}
\end{align*}
where $\mathcal{N}(x) = \displaystyle \int_{-\infty}^x \exp(- \frac{y^2}{2})\frac{dy}{\sqrt[]{2\pi}}$.

The function $\sigma(t,y,x) = \upsilon$ and $b(t,y,x) = y$ are regular enough to satisfy the hypotheses of Theorem \ref{thmbias}. However, this type of example is not interacting through moments but through a kernel and even a discontinuous one.

We approximate $\bar{F}_t(x)$ by its associated empirical mean $\bar{F}^N_t(x) := \frac{1}{N} \sum \limits_{j=1}^N \mathbf{1}_{\{{X}^{j,N}_t \geq x\}}$ calculated upon $N$ particles which leads to the following dynamics for the particle system:
\begin{align*}
	d{X}^{i,N}_t =  \frac{1}{N} \sum \limits_{j=1}^N \mathbf{1}_{\{{X}^{j,N}_t \geq {X}^{i,N}_t\}}dt + \upsilon dW^{i}_t 
\end{align*}
where $\Big((W^{i}_t)_{t \geq 0}\Big)_{1 \leq i \leq N}$ are independent Brownian motions.

\subsubsection*{Results}

We use this model with $\upsilon = \frac{1}{4}$ and $x_0=0$ as a test case. We estimate the solution $\bar{F}_1(\frac{1}{2})$ of the viscous Burgers equation. Since for $t=1$ and $x=\frac{1}{2}$, $t-x=x$ and $2x -t=0$, one has $\bar{F}_1(\frac{1}{2}) = \frac{1}{2}$.  The results are shown in Table \ref{tab-VB-1}.

\begin{table}[!h]
	\begin{center}
		\begin{tabular}{|c|c|c|c|c|c|} 
 			\hline
            Nb. particles & 20 & 40 & 80 & 160 & 320 \\ 
 			\hline
 			Solution Error  & 0.0141425 & 0.0070329 & 0.00352185 & 0.00174521 & 0.000870359  \\
 			\hline
 			Ratio of decrease & $\times$  & 2.01091 & 1.99693 & 2.01802 & 2.00515 \\
 			\hline
  			Precision &  9.84069e-05 & 6.94588e-05 & 4.90673e-05 & 3.46603e-05 & 2.45057e-05 \\
 			\hline
		\end{tabular}
        \caption{Viscous Burgers equation: Evolution of the solution errors as well as the associated precision when increasing the number of particles for a number of $5.10^{6}$ runs, $500$ time steps and a time horizon T=1.}
		\label{tab-VB-1}
	\end{center}
\end{table}

We observe that the ratio of decrease of the solution error defined by $\frac{\text{Solution error}(N/2)}{\text{Solution error}(N)}$ is consistent with an error proportional to $N^{-1}$ confirming a bias behaviour of order $1$ in $\frac{1}{N}$.\\

As for the antithetic sampling, we compute the corresponding variance \eqref{AntiVar} as well as the related precision and obtain:  

\begin{table}[!h]
	\begin{center}
		\begin{tabular}{|c|c|c|c|c|c|} 
 			\hline
            Nb. particles & 20 & 40 & 80 & 160 & 320 \\ 
 			\hline
 			Variance of the solution & 0.00286895 &	0.00109648 &	0.000413156 	& 0.000155082 &	5.89916e-05  \\
 			\hline
 			Ratio of decrease & $\times$  & 2.61651 & 2.65392 & 2.66411 & 2.62889  \\
 			\hline
  			Precision & 3.98911e-06 & 1.49543e-06 & 5.52988e-07 & 2.04566e-07 & 7.65869e-08  \\
 			\hline
		\end{tabular}
        \caption{Viscous Burgers Equation: Evolution of the antithetic variance and its associated precision when increasing the number of particles for a number of $5.10^{6}$ runs, $500$ time steps and a time horizon T=1.}
		\label{tab-VB-2}
	\end{center}
\end{table}

From Table \ref{tab-VB-2}, we observe that the ratio of decrease of the variance $\frac{\text{Variance of solution}(N/2)}{\text{Variance of solution}(N)}$ is around $2.64$ which is slightly greater than two. Therefore, there is a slight gain in using the antithetic sampling for this type of diffusion.

\newpage 
\pagenumbering{gobble}

\bibliographystyle{plain}

\end{document}